\newcommand{\inlineitem}[1][]{%
    \ifnum\enit@type=\tw@
    {\descriptionlabel{#1}}
    \hspace{\labelsep}%
    \else
    \ifnum\enit@type=\z@
    \refstepcounter{\@listctr}\fi
    \quad\@itemlabel\hspace{\labelsep}%
    \fi} \makeatother
\newtheorem{thm}{Theorem}[subsection]
\newtheorem{lem}[thm]{Lemma}
\newtheorem{prop}[thm]{Proposition}
\newtheorem{cor}[thm]{Corollary}
\def\namedlabel#1#2{\begingroup
    \def\@currentlabel{#2}%
    \label{#1}\endgroup
}
\theoremstyle{definition}
\theoremstyle{remark}
\newtheorem{remark}[thm]{Remark}
\numberwithin{equation}{subsection}
\begin{document}
\title[The Schur Indices and Degrees of Faithful Irreducible Representations]{Degrees of Faithful Irreducible Representations of Certain Metabelian Groups and a Question of Sim}
\author[Soham Swadhin Pradhan]{Soham Swadhin Pradhan}
\address{Soham Swadhin Pradhan, School of Mathematics, Harish-Chandra Research Institute, HBNI, Chhatnag Road, Jhunsi, Allahabad, 211019,  India. \, {\it Email address:} {\tt soham.spradhan@gmail.com}}

\author[B. Sury]{B. Sury{*}}
\address{B. Sury, Stat-Math Unit, Indian Statistical Institute, Bangalore Centre, 8-th Mile Mysore Road , Bangalore, 560059, India. \, {\it Email address:} {\tt surybang@gmail.com}}

\subjclass[2010]{20CXX}
\keywords{Metabelian Groups, Faithful Representations, Schur Index, Wedderburn Components}
\thanks{* Corresponding author.\\
E-mail addresses: surybang@gmail.com (B. Sury), soham.spradhan@gmail.com (Soham Swadhin Pradhan).}
\date{\sc \today}
\begin{abstract}
In this paper, we answer affirmatively a question of H S Sim on
representations in characteristic $0$, for a class of metabelian
groups. Moreover, we provide examples to point out that the
analogous answer is no longer valid if the solvable group has
derived length larger than 2. Let $F$ be a field of characteristic
$0$ and $\overline{F}$ be its algebraic closure. We prove that if
$G$ is a finite metabelian group containing a maximal abelian normal
subgroup which is a p-group with abelian quotient, all possible
faithful irreducible representations over $F$ have the same degree
and that the {\it Schur index} of any faithful irreducible
$\overline{F}$-representation with respect to $F$ is always $1$ or
$2$. H S Sim had proven such a result for metacyclic groups when the
characteristic of $F$ is positive and posed the question in
characteristic $0$. Our result answers this question for the above
class of metabelian groups affirmatively. We also determine
explicitly the Wedderburn component corresponding to any faithful
irreducible $\overline{F}$-representation in the group algebra
$F[G]$.
\end{abstract}

\maketitle

\section{Introduction}
Throughout this paper $G$ always denotes a finite group and $F$ a
field of characteristic $0$, unless it is explicitly stated. In this
article, we study the faithful irreducible $F$-representations of a
finite metabelian group with a prime power order maximal abelian normal subgroup cotaining the derived subgroup (or, equivalently abelian quotient). Some important classes of metabelian groups with a prime power order maximal abelian normal subgroup containing the derived subgroup are: holomorphs of cyclic groups of prime power orders (see \cite{Pradhan-2021}), metabelian $p$-groups.

In $1939$, Weisner (see \cite{Weisner-1939}) and in
$1954$, Gasch\"{u}tz  (see \cite{Gaschutz-1954}) characterized finite groups which possess a faithful irreducible representation. For metacyclic groups, an elementary characterization has been given by Sim (see \cite{Sim-2003}, Theorem $1.2$). For metabelian groups, a necessary and sufficient condition for the exixtence of a faithful irreducible representation has been given (see \cite{Bakshi-2013}, Theorem $3.1(2)$).

In $1993$, H. S. Sim studied the structure of faithful irreducible
representations of metacyclic groups, and he proved (see
\cite{Sim-1993}): {If $G$ is a metacyclic group and $F$ is a field
of positive characteristic, then all possible faithful irreducible
$F$-representations of $G$ have the same  degree}. Further, he
asked: {Is the result true when $F$ is of characteristic $0$?} The
question has a positive answer for nilpotent metacyclic groups of
odd order (see \cite{Sim-1995}). The question has an affirmative
answer for metabelian groups, over fields of {\it positive}
characteristic and also for finite groups having a cyclic quotient
by an abelian normal subgroup, over number fields (see
\cite{Rahul-2021}).

In this paper, we answer affirmatively Sim's question for a
metabelian group with a prime power order maximal abelian normal
subgroup containing the derived subgroup. Moreover, we provide
examples to point out that the analogous answer is no longer valid
if the solvable group has derived length $> 2$.

The Schur index was introduced by Schur in $1906$ (see \cite{Schur-1906}). Let $F$ be a field of characteristic $0$ and $\overline{F}$ the algebraic closure of $F$. Let $\rho$ be an irreducible $\overline{F}$-representation of $G$ with the corresponding character $\chi$. The character field $F(\chi)$ of $\rho$ over $F$ is an extension of $F$ adjoining all the values $\chi(g)$'s as $g$ varies in $G$. The {\it Schur index} $m_{F}(\rho)$ of $\rho$ with respect to $F$ is the minimal positive integer $m$ such that $m\chi$ is the character of an $F(\chi)$-representation of $G$ (see Section \ref{Section 2}).

The computation of the Schur index has been studied by many authors in the $20$-th century, some references are \cite{Feit-1982}, \cite{Feit-1983}, \cite{Alexandre-2000}, \cite{Unger-2017}. In $1930$, Brauer (see \cite{Richard-1930}) proved that every integer can occur as Schur index of some irreducible complex representation of some finite group. The most frequent situation is to have Schur indices equal to $1$ and $2$. In $1958$, Roquette (see \cite{Roquette-1958}) proved that the Schur index of any irreducible complex representation of a nilpotent group is always $1$ or $2$. In \cite{Alexandre-2001}, A. Turull gave an algorithm to calculate the Schur index of  irreducible complex representations of special linear groups, and they are all $1$ or $2$. In \cite{Gow-1981}, Gow showed that, in general, the Schur index of any irreducible complex representation of a finite classical group is always $1$ or $2$.  These results (and others) have led us to ask the question: {\it Is the Schur index of any faithful irreducible $\overline{F}$-representation with respect to $F$ of a metabelian group having a prime power order maximal abelian normal subgroup containing the derived subgroup always $1$ or $2$?} In this paper, we shall show that the question has an affirmative answer. Moreover, we provide an example to point out that the answer is not valid if a metabelian group does not have a prime power order maximal abelian normal subgroup cotaining the derived subgroup.
The main theorem of this paper is the following.
\begin{thm}\label{main}
Let $G$ be a finite metabelian group with a prime power order maximal abelian normal subgroup containing the derived subgroup (or, equivalently abelian quotient). Let $F$ be an arbitrary field of characterisic $0$ and $\overline{F}$ be its algebraic closure. Suppose that $\rho$, $\rho^{'}$ are two inequivalent faithful irreducible $\overline{F}$-representations of $G$ (if they exist). Let $m_{F}(\rho), m_{F}(\rho^{'})$ denote the Schur index of $\rho, \rho^{'}$ with respect to $F$. The following hold true.\\
(1) $m_{F}(\rho) = m_{F}(\rho^{'})$.\\
(2) All the faithful irreducible representations of $G$ over $F$ have the same degree.\\
(3) The Schur index of any faithful irreducible $\overline{F}$-representation of $G$ with respect to $F$ is always $1$ or $2$.\\
(4) If $K$ is any prime power order maximal abelian normal subgroup
of $G$ cotaining the derived subgroup and of exponent $p^n$, then
the common degree of faithful irreducible representations of $G$
over $F$ is equal to $[G:I]d$ or $2[G:I]d$, where $d$ is the common
degree of irreducible factors of $p^n$-th cyclotomic polynomial in
$F[X]$. Here, $d$ also equals the common degree of irreducible
representations of $K$ over $F$ with core-free kernel in $G$ and $I$
is the common inertia subgroup of all such representations of $K$ in
$G$.
\end{thm}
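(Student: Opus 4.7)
The plan is to parametrize the faithful irreducible $\overline{F}$-representations of $G$ by Clifford theory with respect to $K$. Since $K$ is a maximal abelian normal subgroup, $C_G(K)=K$, and since $K\supseteq G'$, the quotient $G/K$ is abelian. Any irreducible $\overline{F}$-representation of $G$ therefore has the form $\rho=\operatorname{Ind}_{I}^{G}(\widetilde\lambda)$, where $\lambda\in\widehat{K}$ is a linear character, $I=I_{G}(\lambda)$ is its inertia subgroup, and $\widetilde\lambda$ is an extension of $\lambda$ to $I$ (extensions exist because $I/K$ is abelian, so the Schur-multiplier obstruction vanishes over $\overline{F}$). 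Standard Clifford theory shows $\rho$ is faithful iff $\ker\lambda$ is core-free in $G$. A key preliminary lemma is: \emph{if $\ker\lambda$ is core-free in $G$, then $\lambda$ has order exactly $p^n=\exp(K)$.} Indeed, if $\lambda$ had order $p^k$ then $K^{p^k}\subseteq\ker\lambda$; but $K^{p^k}$ is characteristic in $K$, hence normal in $G$, so it is contained in the $G$-core of $\ker\lambda$, which is trivial, forcing $p^k\geq p^n$. Consequently $F(\lambda)=F(\zeta_{p^n})$.

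\textbf{Uniformity and parts (1), (2).} The next step is to show that all core-free linear characters $\lambda$ of $K$ form a single orbit under the action of $G\times\operatorname{Gal}(F(\zeta_{p^n})/F)$ on $\widehat{K}$, and in particular give conjugate inertia subgroups $I$ in $G$. The idea is that for any two core-free $\lambda,\lambda'$, both of order $p^n$, a suitable element of $\operatorname{Aut}(K)$ carries one to the other; the structure of $K$ as a $p$-group of exponent $p^n$ together with maximality of $K$ in $G$ allows realizing this automorphism through a composition of a $G$-conjugation and a Galois twist. From this transitivity, the characters $\chi$ and $\chi'$ of any two faithful $\overline{F}$-irreducibles $\rho,\rho'$ lie in one $\operatorname{Gal}(\overline{F}/F)$-orbit, and hence yield the same $F$-irreducible representation up to equivalence; this immediately delivers (1) $m_F(\rho)=m_F(\rho')$ and (2) the common $F$-degree.

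\textbf{Degree formula (4).} The $\overline{F}$-dimension of $\rho$ is $[G:I]$; the dimension of the corresponding $F$-irreducible is $m_F(\rho)\cdot[F(\chi):F]\cdot[G:I]$. To identify $[F(\chi):F]$ with the common degree $d$ of irreducible factors of $\Phi_{p^n}(X)\in F[X]$, I compute the stabilizer in $\operatorname{Gal}(F(\zeta_{p^n})/F)$ of $\chi$: because $\lambda$ has order $p^n$ and the inertia subgroup $I$ is common to all core-free characters (by the previous paragraph), the Galois stabilizer of $\chi$ equals the Galois stabilizer of $\lambda$ inside the block sharing the same inertia — and this stabilizer is trivial, giving $[F(\chi):F]=[F(\zeta_{p^n}):F]=d$. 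The same $d$ records the common $F$-degree of irreducible $F$-representations of $K$ with core-free kernel, since these correspond precisely to $\operatorname{Gal}(\overline{F}/F)$-orbits of order-$p^n$ linear characters of $K$, each orbit of size $d$.

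\textbf{Schur index bound (3) and main obstacle.} The hard part, and the only place where the prime-power hypothesis on $|K|$ is essential, is the bound $m_F(\rho)\in\{1,2\}$. My plan is: since $\widetilde\lambda$ is one-dimensional, it is realizable over $F(\zeta_{p^n})$, hence so is $\rho=\operatorname{Ind}_{I}^{G}(\widetilde\lambda)$; consequently $m_F(\rho)$ divides $[F(\zeta_{p^n}):F(\chi)]$. To sharpen this, I apply Brauer--Witt reduction to identify $m_F(\rho)$ with the index in $\operatorname{Br}(F(\chi))$ of a concrete cyclic or crossed-product algebra built from the extension $\widetilde\lambda$ and the Galois action on a cyclic-mod-$K$ section of $I$. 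For odd $p$ the cyclotomic field $F(\zeta_{p^n})$ is abelian of $p$-power degree over $F$; a direct splitting argument (the cocycle takes values in $p$-power roots of unity, which are norms in an appropriate cyclic extension) forces the Brauer class to be trivial, so $m_F(\rho)=1$. For $p=2$ the residual obstruction is at worst a quaternion-like class, giving $m_F(\rho)\in\{1,2\}$. This Brauer-Witt computation — showing the explicit algebra has index dividing $2$ — is the main technical obstacle of the proof, and the examples promised in the introduction will demonstrate that it genuinely fails when $K$ is not of prime-power order.
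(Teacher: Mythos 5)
Your reduction to linear characters $\lambda$ of $K$ with core-free kernel, and the observation that such $\lambda$ must have order $\exp(K)=p^n$ so that $F(\lambda)=F(\zeta_{p^n})$, match the paper's setup. But the step you use to obtain (1) and (2) --- that all core-free linear characters of $K$ form a single orbit under $G\times\mathrm{Gal}(F(\zeta_{p^n})/F)$, so that any two faithful $\overline{F}$-irreducibles are Galois conjugate and ``yield the same $F$-irreducible representation'' --- is false. Take $G=\langle a,b\mid a^{p^2}=b^p=1,\ b^{-1}ab=a^{1+p}\rangle$ with $p$ odd and $F=\mathbb{Q}(\zeta_{p^2})$: here $K=\langle a\rangle$ is self-centralizing, the Galois group $\mathrm{Gal}(F(\zeta_{p^2})/F)$ is trivial, and $G$ has $p-1>1$ pairwise inequivalent faithful absolutely irreducible $F$-representations, so they cannot lie in one Galois orbit. (If your transitivity held, every such $G$ would have a \emph{unique} faithful irreducible $F$-representation, which is much stronger than the theorem and wrong.) The paper instead proves (1) by reducing to $F=F(\chi)=F(\chi')$ and to the case where Yamada's inertia group is all of $G$ (which forces $K$ cyclic), then exhibiting the two Wedderburn components as explicit crossed products over $F(\zeta_{p^n})$ with cocycle values $\zeta_{p^s}$ and $\zeta_{p^s}^{j}$, and computing that both have the same index; equality of Schur indices is an output of that index computation, not of conjugacy of characters. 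Relatedly, your count in (4) is wrong: $[F(\chi):F]$ is not $d=[F(\zeta_{p^n}):F]$ in general, since by Yamada's theorem $\mathrm{Gal}(F(\lambda)/F(\chi))\cong I/K$, which is nontrivial whenever $I\neq K$ (for $Q_8$ over $\mathbb{Q}$ one has $[\mathbb{Q}(\chi):\mathbb{Q}]=1$ while $d=2$). The correct bookkeeping is $\deg_F\rho=m_F(\rho)\,[F(\chi):F]\,[G:K]=m_F(\rho)\,d\,[G:I]$ with $I$ the inertia group of the $F$-irreducible constituent of $\rho\downarrow_K^G$.

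Your sketch of (3) is in the right spirit and is essentially the paper's argument: identify the component with a cyclic algebra and show the cocycle value, a $p$-power root of unity lying in $F$, is a norm from $F(\zeta_{p^n})$, which kills the Brauer class when $p$ is odd. As written, though, it is incomplete in two places. First, you must reduce to the case where the relevant inertia group equals $G$ (using $m_F(\lambda\uparrow_K^G)=m_F(\lambda\uparrow_K^I)$), since only then is $\ker\lambda=1$, $K$ cyclic, and the algebra genuinely cyclic. Second, for $p=2$ with $\sqrt{-1}\notin F$ the group $\mathrm{Gal}(F(\zeta_{2^n})/F)$ need not be cyclic, so ``at worst a quaternion-like class'' requires the case analysis the paper carries out: either $G$ is generalized quaternion, dihedral, or semidihedral, or $G$ is one of the groups $P_2,P_3$ in the classification of $2$-groups with a cyclic self-centralizing normal subgroup, and the components of these are computed individually to have index $1$ or $2$.
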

In \cite{Bakshi-2013}, a complete set of primitive central idempotents and the Wedderburn decomposition of the rational group algebra $\mathbb{Q}[G]$ of a metabelian group $G$ in terms of crossed product algebra has been given. In this paper, we explicitly determine the Wedderburn components corresponding to the faithful irreducible $\overline{F}$-representations of the group algebra $F[G]$, when $G$ is a metabelian group having a prime power order maximal abelian normal subgroup containing the derived subgroup.

For proving Theorem \ref{main}, we use, apart from the classical theories of Schur, Clifford and Yamada's work (see \cite{Yamada-1969}), the theory of central simple algebras and tools from algebraic number theory.

In Section \ref{Section 2}, we briefly recall the Schur index and two basic results on representations of abelian $p$-groups. In Section \ref{Section 3}, after recalling properties of crossed products, we construct certain cyclic algebras that will be used in the proof of the main theorem. In Section \ref{Section 4}, we give a description
how to compute the Wedderburn decomposition of the semisimple group algebra $F[G]$ from the Wedderburn decomposition of $\mathbb{Q}[G]$. After recalling some known facts on faithful irreducible representations of metabelian groups in Section \ref{Section 5}, we prove Theorem \ref{main} in Section \ref{Section 6}. In Section \ref{Section 7}, we expliclitly determine the Wedderburn components of $F[G]$, when $G$ is a finite metabelian group with a prime power order maximal abelian normal subgroup containing the derived subgroup, corresponding to faithful irreducible $\overline{F}$-representations. In last two sections \ref{Section 8} and \ref{Section 9}, we give examples illustrating our results and also point out two examples showing the results are not valid for some solvable groups of derived length $> 2$ or metabelian groups which do not have a prime power order maximal abelian normal subgroup containing the derived subgroup.
\subsection{Notations}
We set some notation that will be used throughout the paper.\\
$\bullet$ If $V$ is a $n$-dimensional vector space over a field $F$,
and $\rho:G\rightarrow {\rm GL}(V)$ is a group homomorphism, we say
that {\it $\rho$ is an $F$-representation of $G$} and $n$ is the
{\it degree of $\rho$}. \\
$\bullet$ For an irreducible $F$-representation $\rho$ of $G$,
ker$(\rho):= \{g \in G \,|\, \rho(g) = \rho(1)\}$.\\
$\bullet$ $G^{'}$ denotes the {\it derived subgroup} of $G$.\\
$\bullet$ For an element $x$ in a  group $G$, we denote by $o(x)$
its order.\\
$\bullet$ For a positive integer $n$, $C_{n}$ denotes a cyclic group of order $n$.\\
$\bullet$ For any $H \leq G$, the {\it $G$-core} of $H$ (i.e. the
intersection of all conjugates of $H$ in $G$) is the largest normal
subgroup of $G$ contained in $H$, and is denoted by core$_{G}{H}$;
$H$ is said to be core-free in $G$ if core$_{G}{H} = 1$.\\
$\bullet$ If $\eta$ is an $F$-representation of a subgroup $H$ of
$G$, the the {\it $G$-conjugate} of $\eta$ by $g$ is the representation
$\eta^{g}$ of $H^{g} = g^{-1}Hg$ given by $\eta^{g}(h) =
\eta(ghg^{-1}), h \in H^{g}$. \\
$\bullet$ If $\theta$ is an irreducible $F$-representation of a normal
subgroup $H$ of a group $G$, the {\it inertia subgroup} of $\theta$ in $G$ is
defined to be $I = \{g \in G : \theta^g = \theta \}$; it is the
stabilizer of $\theta$ under the $G$-action on the set $Irr_{F}(H)$
of irreducible $F$-representations of $H$.\\
$\bullet$ For a field $F$, by $F^*$ the multiplicative group of $F$,
and by $\overline{F}$ an algebraic closure of $F$. \\
$\bullet$ For a positive integer $n$, $\zeta_{n}$ denotes a
primitive $n$-th root of unity.\\
$\bullet$ We also use the abbreviation w.r.t. to denote `with
respect to'. \\
The rest of our notations are standard.

\section{Schur Index}\label{Section 2}
Let $G$ be a  finite group. Let $F$ be a field of characteristic $0$. Let $\widetilde \rho$ be an irreducible $\overline{F}$-representation of $G$ with the character $\widetilde \chi$. Then there exists a unique irreducible $F$-representation $\rho$ of $G$ such that $\widetilde \rho$ occurs as an irreducible constituent of $\rho \otimes_F \overline{F}$, with some multiplicity; the multiplicity is called the {\it Schur index} of $\widetilde \rho$ w.r.t. $F$, and is denoted by $m_F(\widetilde \rho)$.

The Schur index is also the index of a certain central simple algebra defined below.

The character $\widetilde{\chi}$ of $\widetilde{\rho}$ takes values in the field $F(\zeta_{u})$, where $u$ is the exponent of $G$. The character field $F(\widetilde \chi)$ of $\widetilde \rho$ over $F$ is an extension field of $F$ obtained by adjoining all the values $\widetilde{\chi}$'s as $g$ varies in $G$. In fact, $F(\widetilde{\chi})$ is an abelian Galois extension of $F$. Let $Z:= F(\widetilde{\chi})$. Then $e=(\widetilde{\chi}(1)/|G|)\sum_{g\in G} \widetilde{\chi}(g^{-1})g$ is a primitive central idempotent in $Z[G]$ corresponding to $\widetilde{\rho}$ and $A:=Z[G]e$ is a central simple algebra over $Z$ with the identity element $e$. By Wedderburn-Artin theorem (see \cite{Jacobson-2009}, Theorem $(3.21)$), $A\cong M_n(D)$ (as a $Z$-algebra) for a unique positive integer $n$ and a unique division algebra $D$ over $Z$. Then $\dim_{Z}(D)=m^2$ for some integer $m\ge 1$. The integer $m$ is called the {\it index} of the algebra $A$. It turns out that,  $m= m_F(\widetilde \rho)$. Thus, the Schur index of $\widetilde \rho$ w.r.t. $F$ is the index of the central simple algebra $Z[G]e$ and $e$ is the primitive central idempotent in $Z[G]$ corresponding to $\widetilde \rho$. For more details about Schur index, (see \cite{Curtis-1962}, $\S41A$ and $\S 70$,
\cite{Isaacs-1976}, Chapter $9$ and Chapter $10$, \cite{Reiner-1993}).
\subsection{Irreducible representations of abelian $p$-groups}
Consider a finite abelian $p$-group, $p$ a prime. Then, the
following lemmata are well known:

\begin{lem}\label{Lemma 2.0.1}
    Let $G$ be an abelian $p$-group and $F$ a field of characteristic $0$. Then every irreducible $F$-representation of $G$ factors through a faithful irreducible $F$-representation of a cyclic quotient.
\end{lem}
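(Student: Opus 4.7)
The plan is to identify the kernel of $\rho$ by base-changing to $\overline{F}$ and exploiting that a finite subgroup of the multiplicative group of a field is cyclic.

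First I would take an irreducible $F$-representation $\rho$ of the abelian $p$-group $G$ and pass to $\rho \otimes_F \overline{F}$. Since $G$ is abelian, this base-changed representation decomposes as a direct sum of one-dimensional $\overline{F}$-characters $\chi_1, \ldots, \chi_k$ of $G$. Each $\chi_i$ takes values in $F(\zeta_{|G|})$, and standard Clifford-type arguments (or the classical theory of characters over non-algebraically closed fields) give that the $\chi_i$ form a single orbit under the action of $\operatorname{Gal}(F(\zeta_{|G|})/F)$.

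Next I would observe that Galois automorphisms preserve kernels of linear characters: if $\sigma$ is such a Galois element and $g \in G$, then $\chi_i(g) = 1$ if and only if $\sigma(\chi_i(g)) = 1$, i.e. $(\sigma \circ \chi_i)(g) = 1$. Hence all the $\chi_i$ have a common kernel $N$, and $\ker(\rho) = \bigcap_i \ker(\chi_i) = N$. The factored character $\overline{\chi}_1$ on $G/N$ is then injective, so $G/N$ embeds as a finite subgroup of $\overline{F}^*$; such a subgroup is necessarily cyclic.

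Thus $\rho$ factors as $\widetilde{\rho} \circ \pi$, where $\pi \colon G \twoheadrightarrow G/N$ is the quotient map and $\widetilde{\rho}$ is an $F$-representation of the cyclic $p$-group $G/N$. By construction $\widetilde{\rho}$ is faithful (its kernel pulls back to $\ker(\rho)=N$) and irreducible (its image agrees with that of $\rho$), completing the proof. The only subtle point, and what I would treat most carefully, is the Galois-orbit description of the constituents $\chi_i$; everything else is formal once that is in hand.
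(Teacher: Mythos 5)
Your proof is correct. The paper states this lemma without proof (it is introduced as ``well known''), so there is no in-text argument to compare against; your write-up is the standard one: the irreducible constituents of $\rho\otimes_F\overline{F}$ are linear and form a single Galois orbit, Galois conjugation preserves the kernel of a linear character since it fixes $1$, hence $\ker\rho$ equals the common kernel $N$, and $G/N$ embeds in $\overline{F}^{*}$ and is therefore cyclic. The one point you flag as delicate --- that the constituents form a single Galois orbit (each occurring with multiplicity the Schur index, which is $1$ here since $F[G]$ is commutative) --- is indeed the only nontrivial input, and it is a standard fact (e.g.\ Curtis--Reiner or Isaacs, Chapter 9). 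Note also that your argument nowhere uses that $G$ is a $p$-group; the statement holds for any finite abelian group.
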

\begin{lem}\label{Lemma 2.0.2}
    Let $G$ be a cyclic group of order $p^{n}$ and $F$ a field of characteristic $0$. Then the set of all faithful irreducible $F$-representations of $G$ (up to equivalence) is in a bijective correspondence with the irreducible factors of the $p^{n}$-th cyclotomic polynomial in $F[X]$. Moreover, all the faithful irreducible $F$-representations of $G$ have the same degree, which is equal to the common degree of irreducible factors of the $p^{n}$-th cyclotomic polynomial over $F$.
\end{lem}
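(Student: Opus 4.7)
The plan is to identify both sides of the claimed bijection with the orbit set of $\Gamma := \mathrm{Gal}(F(\zeta_{p^{n}})/F)$ acting naturally on the set $P$ of primitive $p^{n}$-th roots of unity in $\overline{F}$, and then to observe that every $\Gamma$-orbit in $P$ has the same cardinality.

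Write $G = \langle g \rangle$. Each irreducible $\overline{F}$-representation of $G$ is one-dimensional and determined by the image $\chi(g)$, which must be a $p^{n}$-th root of unity; such a character is faithful precisely when $\chi(g)$ is primitive. So the faithful irreducible $\overline{F}$-representations of $G$ stand in natural bijection with $P$. For any faithful irreducible $F$-representation $\rho$, the extension $\rho \otimes_F \overline{F}$ decomposes as the sum of the $\Gamma$-conjugates of a single character $\chi_\zeta : g \mapsto \zeta$ with $\zeta \in P$; conversely, any $\Gamma$-orbit $\Gamma \cdot \zeta$ arises from the faithful irreducible $F$-representation realised on the $F$-vector space $F(\zeta)$ with $g$ acting as multiplication by $\zeta$. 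This yields a bijection between the faithful irreducible $F$-representations of $G$ (up to equivalence) and the $\Gamma$-orbits in $P$, with $\deg \rho$ equal to the orbit size $[F(\zeta):F]$.

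On the other hand, the roots of the $p^{n}$-th cyclotomic polynomial in $\overline{F}$ are precisely the elements of $P$, and its irreducible factors in $F[X]$ correspond bijectively to the $\Gamma$-orbits in $P$, each factor being the product $\prod_{\eta \in \Gamma \cdot \zeta}(X - \eta)$ of degree equal to the orbit size. Composing this with the previous bijection produces the asserted correspondence between faithful irreducible $F$-representations of $G$ and irreducible factors of the $p^{n}$-th cyclotomic polynomial over $F$, matching degrees on both sides.

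Finally, for any $\zeta \in P$ one has $F(\zeta) = F(\zeta_{p^{n}})$ since every $p^{n}$-th root of unity is a power of $\zeta$; hence any element of $\Gamma$ fixing $\zeta$ fixes all of $F(\zeta_{p^{n}})$ and must be trivial. So every stabiliser in $\Gamma$ is trivial, every orbit has the common size $|\Gamma| = [F(\zeta_{p^{n}}):F]$, and the degree equality follows. The only non-cosmetic step is this orbit-size observation (which uses crucially that $\zeta$ generates all of $\mu_{p^{n}}$); with it in hand, the rest of the argument is a direct unravelling of the Galois-theoretic correspondence.
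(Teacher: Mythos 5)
Your proof is correct. The paper states this lemma without proof (it is introduced with ``the following lemmata are well known''), so there is no in-paper argument to compare against; your identification of both sides of the bijection with the orbits of $\Gamma = \mathrm{Gal}(F(\zeta_{p^n})/F)$ on the primitive $p^n$-th roots of unity, together with the observation that any such root generates the whole group of $p^n$-th roots of unity and hence has trivial stabiliser in $\Gamma$, is the standard argument and proves exactly what is claimed. The one assertion worth a line of justification is that $\rho\otimes_F\overline F$ is a \emph{multiplicity-free} sum of Galois conjugates of a single character: this holds because $F[G]$ is commutative, so every Wedderburn component is a field and the Schur index of each irreducible constituent is $1$ (consistent with the paper's use of this lemma, e.g.\ in deducing $F(\lambda)=F(\zeta_{p^n})$ in the proof of Theorem \ref{main}). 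As you note, the argument nowhere uses that the order is a prime power, so the statement in fact holds for arbitrary finite cyclic groups; the prime-power hypothesis is only needed for the companion Lemma \ref{Lemma 2.0.1}.
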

\section{Crossed Products and Cyclic Algebras}\label{Section 3}
In this section, we describe the construction of classical crossed products and cyclic algebras.
\subsection{Classical crossed product algebras} $L$ denotes a finite Galois extension of the field $F$,
with Galois group Gal$(L/F)$. Let $f: {\rm Gal}(L/F) \times {\rm Gal}(L/F) \longrightarrow L^{*}$ be a {\it factor set} (see \cite{Jacobson-2009}, $\S 8.4$) of ${\rm Gal}(L/F)$ with values in $L^{*}$. We may then form the {\it crossed product algebra} (see \cite{Jespers-2016}, page no. 66)
$$A = (L, \textrm{Gal}(L/F), f) = \displaystyle {\bigoplus_{\sigma \in {\rm Gal}(L/F)}}{Lu_{\sigma}},$$
having an $L$-basis consisting of symbols $\{u_{\sigma}: \sigma \in
{\rm Gal}(L/F)\}$, with multiplication in $A$ is defined by the
formulas below which give the action and the twisting respectively:
$$\mathrm{(action)} \hspace{2cm} u_{\sigma}x = \sigma(x)u_{\sigma},$$
$$\mathrm{(twisting)} \hspace{2cm} u_{\sigma}u_{\tau} = f(\sigma, \tau)u_{\sigma \tau},$$
for all $\sigma, \tau \in {\rm Gal}(L/F), x \in L$, and each $f(\sigma, \tau) \in L^{*}$. The condition $f$ be a factor set is precisely the condition that $A$ be an associative algebra. Note that $F$ is contained in the center of $A$ but that $L$
is not (if $L \neq K$), since the $u_{\sigma}$'s need not centralize $L$. Also note that $A$ is a central simple $F$-algebra with dim$_{F}{A} = n^2$, where $n = |{\rm{Gal}}(L/F)| = dim_{F}{L}$. When $f$ is the trivial factor set (all of whose values are $1$), the crossed product algebra $A$ is just a full matrix algebra $M_{n}(F)$.
\subsection{Cyclic algebras} Let $L/F$ be a cyclic (Galois) extension, i.e., a finite, normal, separable field extension with a cyclic Galois group, say generated by an automorphism $\sigma$ of order $n (= dim_{F}{L})$. Fix an element $a \in F^{*}$ and a symbol $u$, we let
$$A = \displaystyle {\bigoplus_{j = 0}^{n-1}{Lu^{j}}},$$
and multiply elements in $A$ by using the distributive law, and the two rules
$$u^{j}x = \sigma^{j}(x)u^{j},~ u^n = a,~\mathrm{for~all}~x \in L.$$
It is easy to see that $F \subseteq Z(A)$, so $A$ is an $F$-algebra, of dimension $n^{2}$. This algebra is denoted by $(L/F, \sigma, a)$, and is called the {\it cyclic algebra associated with $(L, \textrm{Gal}(L/F), \sigma)$ and $a \in F - \{0\}$} (see \cite{Jespers-2016}, page no. 70).
\begin{prop}[\cite{Jespers-2016}, Proposition 2.6.7]\label{cyclic}
Let $N_{L/F}$ denote the field norm from $L$ to $F$. $A \cong M_{n}(F)$ as an $F$-algebra if and only if $a \in N_{L/F}(L^{*})$.
\end{prop}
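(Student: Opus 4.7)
The plan is to prove the two implications separately. For $(\Leftarrow)$, I will reduce the algebra $A = (L/F,\sigma,a)$ to the \emph{split} cyclic algebra $(L/F,\sigma,1)$ by a change of $L$-basis, and then realize the latter inside $\mathrm{End}_F(L) \cong M_n(F)$ via the regular representation of $L$ together with the Galois action. For $(\Rightarrow)$, I will exploit that $M_n(F)$ has, up to isomorphism, a unique simple left module (of $F$-dimension $n$), examine the action of $u$ on it, and read off a norm identity for $a$.

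For the ``if'' direction, suppose $a = N_{L/F}(b)$ for some $b \in L^{*}$. I would set $v := b^{-1}u \in A$. A direct calculation using commutativity of $L$ gives $v\lambda = b^{-1}u\lambda = b^{-1}\sigma(\lambda)u = \sigma(\lambda)v$, and an induction on $k$ shows $(b^{-1}u)^{k} = (b^{-1}\sigma(b^{-1})\cdots \sigma^{k-1}(b^{-1}))\,u^{k}$; at $k = n$ this yields $v^{n} = N_{L/F}(b^{-1})\,u^{n} = N_{L/F}(b)^{-1}a = 1$. Since $v^{j} = \beta_{j}u^{j}$ with $\beta_{j} \in L^{*}$, the elements $\{v^{j}\}_{j=0}^{n-1}$ still form an $L$-basis of $A$, so $A$ is isomorphic as an $F$-algebra to the cyclic algebra $(L/F,\sigma,1)$. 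I would then define $\phi: (L/F,\sigma,1) \to \mathrm{End}_F(L)$ by sending $\lambda \in L$ to left multiplication $m_{\lambda}$ by $\lambda$ on $L$ and sending $v$ to $\sigma$ (viewed as an $F$-linear automorphism of $L$). The defining relations $v\lambda = \sigma(\lambda)v$ and $v^{n} = 1$ translate to $\sigma \circ m_{\lambda} = m_{\sigma(\lambda)} \circ \sigma$ and $\sigma^{n} = \mathrm{id}$ in $\mathrm{End}_F(L)$, both of which hold, so $\phi$ is well defined. Because $(L/F,\sigma,1)$ is central simple (a fact about classical crossed products already recorded in the paper), $\phi$ is injective, and since both source and target have $F$-dimension $n^{2}$, $\phi$ must be an isomorphism.

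For the ``only if'' direction, assume $A \cong M_n(F)$ and let $M$ be a simple left $A$-module, so $\dim_F M = n$. Restricting the action to $L \subseteq A$ turns $M$ into a left $L$-module, and since $\dim_F L = n = \dim_F M$ while $L$ is a field, $M$ is necessarily one-dimensional over $L$. Fix an $L$-basis vector $m_{0} \in M$ and write $u \cdot m_{0} = c\,m_{0}$ for some $c \in L$; note $c \neq 0$ because $u^n = a \in F^{*}$ makes $u$ a unit in $A$, hence $u$ acts invertibly on $M$. Using $u\lambda = \sigma(\lambda)u$, an induction on $k$ gives $u^{k}\cdot m_{0} = \bigl(c\,\sigma(c)\cdots \sigma^{k-1}(c)\bigr)\,m_{0}$, and at $k = n$ the product is $N_{L/F}(c)$, so $u^{n}\cdot m_{0} = N_{L/F}(c)\,m_{0}$. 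Comparing with $u^{n} = a$ forces $a = N_{L/F}(c) \in N_{L/F}(L^{*})$.

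The only delicate step is the appeal to central simplicity of the trivial cyclic algebra $(L/F,\sigma,1)$ in the first half; the paper has already noted that every classical crossed product is central simple, so I would just cite that. If a self-contained argument were preferred, I would prove surjectivity of $\phi$ directly, using the normal basis theorem to produce an $F$-basis of $L$ of the form $\{\sigma^{j}(\alpha)\}_{j=0}^{n-1}$ and then expressing an arbitrary $F$-linear endomorphism of $L$ as $\sum_{j} m_{\lambda_{j}}\sigma^{j}$; the remaining manipulations with the twisted multiplication are routine.
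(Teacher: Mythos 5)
Your proof is correct and is essentially the standard argument for this result; the paper itself offers no proof, simply citing Proposition 2.6.7 of Jespers--del R\'io, whose proof runs along the same lines (reduction to the split algebra $(L/F,\sigma,1)$ via the substitution $u\mapsto b^{-1}u$ followed by the regular representation in one direction, and the $\sigma$-semilinear action of $u$ on the $n$-dimensional simple module in the other). Both implications, including the two delicate points you flag explicitly --- central simplicity of the crossed product to get injectivity of $\phi$, and invertibility of $u$ (from $u\cdot a^{-1}u^{n-1}=1$) to guarantee $c\neq 0$ --- are handled correctly.
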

\subsection{Construction of certain cyclic algebras}\label{subsection 3.4}
We now construct certain cyclic algebras which will be used in the proof of our main theorem.

$(i)$ Let $p$ be odd. For $n \geq 2$ and let $\zeta_{p^{n}}$ be a primitive $p^{n}$-th root of unity. Let $F$ be a field of characteristic $0$. Suppose that $\zeta_{p^s} \in F$ for some positive integer $s$ such that $n > s \geq 1$. Let $L = F(\zeta_{p^n})$. As $p$ is odd and $\zeta_{p} \in F$, then $L/F$ is a cyclic (Galois) extension of degree a power of $p$, say, $p^l$. Let Gal$(L/F) = \langle \sigma \rangle \cong C_{p^l}$, where $\sigma$ is defined by: $\zeta_{p^n} \mapsto \zeta^{1 + p^{n-l}}_{p^n}$.
Fix the element $\zeta_{p^{s}} \in F^{*}$ and a symbol $u$, we let
$A = Lu^{0} \oplus Lu \oplus \dots \oplus Lu^{{p^l-1}},$
and multiply elements in $A$ by using the distributive law, and the two rules
$$ ux = \sigma(x)u, ~\mathrm{for~all}~  x\in  L
~\textrm{and}~
u^{p^l} = \zeta_{p^{s}} \in F^{*},$$
where we identify $u^{0}$ with the unity element of $A$.
Then $A$ is the cyclic algebra $(L/F, \sigma, \zeta_{p^{s}})$, which is a central simple $F$-algebra with a self-centralizing maximal subfield $L$.
It can be shown that, there exists an element $\alpha \in L$ such that ${N}_{L/F}(\alpha) = \zeta_{p^{s}}$ in the following way.

Let $\zeta = \zeta_{p^{n}}$. Then
\begin{align*}
N_{L/F}(\zeta) = \zeta.\zeta^{1 + p^{n-l}} \cdots {\zeta^{({1 + p^{n-l}})}}^{p^l - 1}.
\end{align*}
The exponent
\begin{align*}
&1 + {1 + p^{n-l}}+ \cdots + {({1 + p^{n-l}})}^{p^l - 1}\\
& = \frac{(1 + p^{n-l})^{p^l} - 1}{p^{n-l}}\\
& = \sum^{p^l}_{r = 1}{{p^l}\choose{r}}\frac{p^{rn - rl}}{p^{n-l}}\\
& = p^{l} + \sum^{p^l}_{r = 2}{{p^l}\choose{r}}{p^{(r - 1)(n - l)}}.
\end{align*}
The basic observation is that for all $r \geq 2$,
\begin{align*}
(*)\hspace{.8cm} {{p^l}\choose{r}}{p^{(r - 1)(n - l)}} \equiv 0 ({\rm mod} p^n).
\end{align*}
To see this let $p^k$ be the power of $p$ dividing $r$ (includes the case $k = 0$). Then
\begin{align*}
{{p^l}\choose{r}} \equiv 0 ~~~({\rm mod}~ p^{l -k}).
\end{align*}
But $l - k + (r-1)(n-l) \geq n$ as $(r-2)(n-l) \geq k$. From the elementary observation $(*)$, it follows that
\begin{align*}
N_{L/F}(\zeta) = N_{L/F}(\zeta_{p^n}) = \zeta^{p^l}_{p^n} = \zeta_{p^{n-l}}.
\end{align*}
We may assume that $\zeta_{p^n}$ has been chosen so that $\zeta^{p^l}_{p^n} = \zeta_{p^{n-l}} = \zeta_{p^s}.$ Therefore, $\zeta_{p^s}$ belongs to the image of the norm map $N_{L/F}$. As $\zeta_{p^s}$ belongs to the image of the norm map $N_{L/F}$, then by Proposition \ref{cyclic}, $A = (L/F, \sigma, \zeta_{p^{s}}) \cong M_{p^l}(F)$, and of dimension $p^{2l}$ over $F$.

$(ii)$ Let $n \geq 3$ and $\zeta_{2^{n}}$ be a primitive $2^{n}$-th root of unity. Let $F$ be a field of characteristic $0$ and $L = F(\zeta_{2^n})$. Suppose that $F$ contains $\zeta_{2^s},$ where $2 \leq s < n$.  As $p = 2$ and $F$ contains $\sqrt{-1}$, then $L/F$ is a cyclic (Galois) extension, and of degree power of $2$, say, $2^l$. Let {Gal}$(L/F) = \langle \sigma \rangle \cong C_{2^l}$, where $\sigma$ is defined by $\zeta_{2^n} \longrightarrow \zeta^{1 + 2^{n-l}}_{2^n}$. Fix $\zeta_{2^{s}} \in F^{*}$ and a symbol $u$, we let
$A = Lu^{0} \oplus Lu \oplus \dots \oplus Lu^{{2^l-1}},$
and multiply elements in $A$ by using the distributive law, and the two rules
$$ux = \sigma(x)u, ~\mathrm{for~all}~  x\in  L
~\textrm{and}~
u^{2^l} = \zeta_{2^{s}} \in F^{*}.$$
Then $A$ form a cyclic algebra $(L/F, \sigma, \zeta_{2^{s}})$. It can be shown that there exists an element $\alpha \in L$ such that ${N}_{L/F}(\alpha) = \zeta_{2^{s}}$, and the proof is the same as the odd case. From the Proposition \ref{cyclic}, $A = (L/F, \sigma, \zeta_{2^{s}}) \cong M_{2^l}(F)$, and of dimension $2^{2l}$ over $F$.
\section{Wedderburn Decomposition of $F[G]$}\label{Section 4}
Let $G$ be a finite group and $F$ a field of characteristic $0$. Let $\chi$ be an irreducible $\overline{F}$-character of $G$.\\
$e_{F}(\chi) :=$ the unique primitive central idempotent $e$ of $F[G]$ with $\chi(e) \neq 0$,\\
$A_{F}(\chi) := F[G]e_{F}(\chi) =$ the unique Wedderburn component $A$ of $F[G]$ with $\chi(A) \neq 0.$ $A_{F}(\chi)$ is called the {\it Wedderburn component} in $F[G]$ corresponding to $\chi$.

Let $\chi_{1}, \chi_{2}, \dots, \chi_{r}$ be representatives of the algebraically conjugacy classes of the absolutely irreducible characters of $G$ over $\mathbb{Q}$. Let $A_{i} = A_{\mathbb{Q}}(\chi_{i}), i = 1,2, \dots, r$.
Then the rational group algebra $\mathbb{Q}[G]$ is:
\begin{align*}
\mathbb{Q}[G] = A_{1} \oplus A_{2} \oplus \dots \oplus A_{r}.
\end{align*}
Note that for each $i$, $A_{i} \cong M_{n_{i}}(D_{i}),$ where $D_{i}$ is a finite dimensional division algebra over $\mathbb{Q}$. Therefore, we have
$\mathbb{Q}[G] \cong M_{n_{1}}(D_{1}) \oplus M_{n_{2}}(D_{2}) \oplus \dots \oplus M_{n_{r}}(D_{r}).$ It is well known that the center of $A_{i}$ is isomorphic to the the character field $\mathbb{Q}(\chi_{i})$. So the center of $D_{i}$ is isomorphic to $\mathbb{Q}(\chi_{i})$.

It is clear that
$$\displaystyle F[G] \cong \mathbb{Q}[G] \otimes_{\mathbb{Q}}{F}
\cong A_{1}\otimes_{\mathbb Q}{F} \oplus A_{2}\otimes_{\mathbb Q}{F} \oplus \dots \oplus A_{r}\otimes_{\mathbb Q}{F}.$$

For notational convenience, set $\chi = \chi_{i}$, $A = A_{i}$. Then
$$A \otimes_{\mathbb{Q}}{F} \cong (A\otimes_{\mathbb{Q}(\chi)}{\mathbb{Q}(\chi)})\otimes_{\mathbb{Q}}{F} \cong A\otimes_{\mathbb{Q}(\chi)}({\mathbb{Q}(\chi)}\otimes_{\mathbb{Q}}{F}).$$
Let $E = \mathbb{Q}(\chi).$ Then
$$A \otimes_{\mathbb{Q}}{F} \cong A \otimes_{E}(E \otimes_{\mathbb{Q}}{F}).$$

As $E/\mathbb{Q}$ is a finite separable extension, by the Primitive Element Theorem $E = \mathbb{Q}(\alpha)$, for some $\alpha \in E$. Let $f =$ Min$_{\mathbb{Q}}(\alpha)$, the minimal polynomial of $\alpha$ over $\mathbb{Q}$. Then
\begin{align*}
E \cong \mathbb{Q}[X]/(f)~
\textrm{and}~ E \otimes_{\mathbb{Q}}{F} \cong \mathbb{Q}[X]/(f)\otimes_{\mathbb{Q}}{F} \cong (\mathbb{Q}[X]\otimes_{\mathbb{Q}}{F})/(f \otimes 1) \cong F[X]/{(f)}.
\end{align*}
As $E/\mathbb{Q}$ is separable extension, all the roots of $f$ in an algebraic closure of $\mathbb{Q}$ have multiplicity one. Therefore, we obtain
$f = f_{1}f_{2} \dots f_{k},$
where $f_{i}$'s are coprime irreducible polynomials in $F[X]$. Then by the Chinese Remainder Theorem, we have
\begin{align*}
E \otimes_{\mathbb{Q}}{F} \cong F[X]/(f) \cong \oplus^{k}_{i = 1}F[X]/(f_{i}),
\end{align*}
a direct sum of fields. In general, they may not be all isomorphic.

Assume that $F$ is a number field. Then $F/\mathbb{Q}$ is a normal extension.
Let $\overline{E}$ and $\overline{F}$ be algebraic closure of $E$ and $F$ respectively. Let $\alpha_{i} \in \overline{F}$ be a root of $f_{i}$ and set $K_{i} = F(\alpha_{i})$. Therefore,
\begin{align*}
E \otimes_{\mathbb{Q}}{F} \cong \oplus^{k}_{i = 1}K_{i}.
\end{align*}
For every $i = 1,2, \dots, k$, there is an $\mathbb{Q}$-isomorphism $\tau_{i}: E \longrightarrow \mathbb{Q}(\alpha_{i})$ such that $\tau_{i}(\alpha) = \alpha_{i}$. Since $\overline{E}$ and $\overline{F}$ are the algebraic closure of $\mathbb{Q}$, they are isomorphic. Each $\tau_{i}$ extends to an isomorphism $\tau_{i} : \overline{E} \longrightarrow \overline{F}$, which keep denoting $\tau_{i}$. Then
$\sigma_{i} = \tau_{i} o \tau^{-1}_{1} \in$ Gal$(\overline{E}/\mathbb{Q})$ and $\sigma_{i}(\mathbb{Q}(\alpha_{1})) = \mathbb{Q}(\alpha_{i})$. Since $F/\mathbb{Q}$ is a normal extension, it follows that
$$\sigma_{i}(K_{1}) = \sigma_{i}(F(\alpha_{1})) = F(\alpha_{i}) = K_{i}.$$
Thus $\sigma_{i}$ restricts to an $\mathbb{Q}$-isomorphism $K_{1} \longrightarrow K_{i}.$
Observe that $K_{i} = F(\alpha_{i})$ is the compositum of $\tau_{i}(E)$ and $F$ in $\overline {\mathbb{Q}}$. Using this we can consider $K_{i}$ an $(E, F)$-bimodule. This defines a structure of $E$-algebra and of $F$-algebra in $K_{i}$. Clearly $\sigma_{i}: K_{1} \longrightarrow K_{i}$ is an isomorphism $(E, F)$-bimodules and hence it is an isomorphism of $E$-algebras and of $F$-algebras. We have
\begin{align*}
A \otimes_{\mathbb{Q}}{F} \cong A \otimes_{E}(E \otimes_{\mathbb{Q}}{F}) \cong \oplus^{k}_{i = 1} (A \otimes_{E} K_{i}).
\end{align*}
As $A$ is a central simple $E$-algebra, the algebra $A \otimes_{E}{K_{i}}$ is a central simple $K_{i}$-algebra.
As all the $K_{i}$'s are isomorphic as $E$-algebras and therefore all the $C_{i} = A \otimes_{E}{K_{i}}$ are isomorphic $E$-algebras. Furthermore, the $F$-algebra structure on $K_{i}$ induces a structure of $F$-algebra on $C_{i}$ and the isomorphism $C_{1} \longrightarrow C_{i}$ is an isomorphism of $F$-algebras. So we have
$$A \otimes_{\mathbb{Q}}{F} \cong \oplus^{k}_{i = 1}C_{i} ~~~~~(\rm{as}~F-\rm{algebras}).$$

If $F$ is an arbitrary field of characteristic $0$ then we have
$$A \otimes_{\mathbb{Q}}{F} \cong A \otimes_{\mathbb{Q}(\chi)}(\mathbb{Q}(\chi)\otimes_{\mathbb{Q}}{F}) \cong A \otimes_{E}{F(\chi)} \oplus \dots \oplus A \otimes_{E}{F(\chi)},$$
where the last summands correspond respectively to the algebraic conjugate characters $\chi^{\sigma}, ~\sigma \in$ Gal$(\mathbb{Q}(\chi)\cap F/\mathbb{Q})$.
Therefore, we have
$$A \otimes_{\mathbb{Q}}{F} \cong [\mathbb{Q}(\chi)\cap F: \mathbb{Q}] A \otimes_{E}{F(\chi)}.$$
\begin{remark}\label{Remark 4.0.1}
If $F$ is an arbitrary field of characteristic $0$ then one can determine the Wedderburn decomposition of $F[G]$ from the Wedderburn decomposition of $\mathbb{Q}[G]$. If $\chi$ is an irreducible $\overline{F}$-character of $G$ then
$$A_{F}(\chi) \cong A_{\mathbb{Q}}(\chi) \otimes_{\mathbb{Q}(\chi)}{F(\chi)}~ \textrm{and}~ A_{\mathbb{Q}}(\chi) \otimes_{\mathbb{Q}}{F} \cong [\mathbb{Q}(\chi)\cap F: \mathbb{Q}] A_{F}(\chi).$$
\end{remark}
\subsection{Generalized Quaternion Groups, Dihedral Groups and Semidihedral Groups}
It is a well-known fact that Generalized Quaternion Group, Dihedral Group and Semidihedral Group all have a unique faithful irreducible $\mathbb{Q}$-representation.

The {\it generalized quaternion group} $Q_{2^n}, n \geq 3$, of order $2^n$, is defined by
$$Q_{2^n} = \langle{ x, y ~|~ x^{2^{n-1}} = 1, y^2 = x^{2^{n-2}}, y^{-1}xy = x^{-1}}\rangle.$$

The {\it dihedral group} $D_{2^n}, n \geq 2$, of order $2^n$, is defined by
$$D_{2^n} = \langle{ x, y ~|~ x^{2^{n-1}} = y^2 = 1, y^{-1}xy = x^{-1}}\rangle.$$

The {\it semidihedral group} $SD_{2^n}, n \geq 4$, of order $2^n$, is defined by
$$SD_{2^n} = \langle{ x, y ~|~ x^{2^{n-1}} = y^2 = 1, y^{-1}xy = x^{-1 + 2^{n-2}}}\rangle.$$

The following results are well-known, and we skip the proofs.

We set $\zeta = \zeta_{2^{n-1}}.$
\begin{lem}\label{Lemma 4.1.1}
Let $\rho$ be a faithful irreducible complex representation of $Q_{2^n}$ and $\chi$ be its corresponding character. Then
$\mathbb{Q}(\chi) = \mathbb{Q}(\zeta + \zeta^{-1})$, and
$m_{\mathbb{Q}}(\rho) = 2$. The simple component of $\mathbb{Q}[G]$ corresponding to $\chi$ is the cyclic algebra: $$(\mathbb{Q}(\zeta)/\mathbb{Q}(\zeta + \zeta^{-1}), \sigma, -1) = \mathbb{Q}(\zeta).1 + \mathbb{Q}(\zeta).u, u^2 = -1, u \zeta u^{-1} = \zeta^{\sigma} = \zeta^{-1},$$ which is central simple algebra over $\mathbb{Q}(\zeta + \zeta^{-1})$. Moreover, this algebra is Hamilton's quaternion algebra over $\mathbb{Q}(\zeta + \zeta^{-1})$.
\end{lem}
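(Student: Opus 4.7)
My plan is to construct the faithful irreducible complex representation of $Q_{2^n}$ by induction from the cyclic subgroup $H = \langle x \rangle$ of index $2$ and then identify the corresponding Wedderburn component of $\mathbb{Q}[Q_{2^n}]$ as the advertised cyclic algebra. Let $\lambda$ be the faithful linear character of $H$ sending $x$ to $\zeta$. Since $\lambda^y(x) = \lambda(x^{-1}) = \zeta^{-1} \neq \zeta$, the inertia subgroup of $\lambda$ in $G$ equals $H$, so by Clifford theory $\rho := \mathrm{Ind}_H^G \lambda$ is irreducible of degree $2$. A direct computation on coset representatives $\{1,y\}$ gives $\chi(x^k) = \zeta^k + \zeta^{-k}$ and $\chi(y x^k) = 0$, so the character field equals $\mathbb{Q}(\zeta + \zeta^{-1})$ and faithfulness follows because $\chi(x^k) = 2$ forces $k = 0$. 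Any other faithful irreducible $\mathbb{C}$-representation corresponds to a Galois conjugate of $\lambda$ and sits in the same Wedderburn component.

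To identify the component $A = \mathbb{Q}[Q_{2^n}]e$, where $e$ is the primitive central idempotent attached to the orbit of $\chi$, I observe that $xe \in A$ has minimal polynomial $\Phi_{2^{n-1}}(X)$, so $\mathbb{Q}[xe]$ is a subfield of $A$ isomorphic to $L = \mathbb{Q}(\zeta)$. Setting $u := ye$, the defining relations of $Q_{2^n}$ yield $u^2 = \zeta^{2^{n-2}} = -1$ and $u\zeta u^{-1} = \zeta^{-1} = \sigma(\zeta)$, where $\sigma$ generates $\mathrm{Gal}(L/K)$ with $K = \mathbb{Q}(\zeta+\zeta^{-1})$. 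These are exactly the structure constants of the cyclic algebra $(L/K,\sigma,-1)$. Since the natural map from that cyclic algebra to $A$ is surjective and both sides have $\mathbb{Q}$-dimension $2^n$ (the cyclic algebra has $K$-dimension $[L:K]^2 = 4$, and $\dim_\mathbb{Q} A = \chi(1)^2 [K:\mathbb{Q}] = 2^n$), the map is an isomorphism, so $A \cong (L/K,\sigma,-1)$.

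To pin the Schur index down to $2$, I will show this cyclic algebra is a division algebra. By Proposition \ref{cyclic}, triviality would require $-1 \in N_{L/K}(L^*)$. However, $\sigma$ is complex conjugation on $L \subset \mathbb{C}$ and $K$ is totally real, so for every $z \in L^*$ one has $N_{L/K}(z) = z\,\sigma(z) = |z|^2 > 0$ in every real embedding of $K$. Hence $-1 \notin N_{L/K}(L^*)$, the cyclic algebra is division, and from $\chi(1) = n m_\mathbb{Q}(\rho) = 2$ with $n \geq 1$ we conclude $m_\mathbb{Q}(\rho) = 2$.

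Finally, for the Hamilton identification, set $i := \zeta^{2^{n-3}}$ and $j := u$ inside $A$. Then $i$ has multiplicative order $4$, so $i^2 = -1$; also $j^2 = u^2 = -1$, and $j i j^{-1} = \sigma(i) = i^{-1} = -i$, giving $ij = -ji$. The $K$-subalgebra generated by $\{1,i,j,ij\}$ is therefore a copy of $\left(\tfrac{-1,-1}{K}\right)$, and by dimension ($\dim_K A = 4$) it exhausts $A$. The main technical obstacle is the clean identification $A \cong (L/K,\sigma,-1)$, which rests on checking that the Wedderburn component is generated by the images of $x$ and $y$ with exactly the asserted defining relations together with the dimension count; once this is in place, the positivity of $N_{L/K}$ and the explicit quaternion generators conclude the proof.
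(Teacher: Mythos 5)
Your proof is correct and complete. There is nothing in the paper to compare it against: the authors state this lemma and its two companions with the remark that ``the following results are well-known, and we skip the proofs.'' Your route is the standard one and all the key steps are in place --- inducing the faithful linear character $\lambda$ of $\langle x\rangle$ (whose inertia group is $\langle x\rangle$ itself, giving irreducibility), reading off $\chi$ and its character field, presenting the Wedderburn component on the generators $xe$ and $ye$ with the relations of the cyclic algebra $(L/K,\sigma,-1)$, ruling out splitting because $N_{L/K}$ is totally positive on the CM pair $L/K$ while $-1$ is totally negative, and finally exhibiting Hamilton's generators $i=\zeta^{2^{n-3}}$, $j=u$. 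One arithmetic slip, harmless to the argument: since $[K:\mathbb{Q}]=[\mathbb{Q}(\zeta_{2^{n-1}}+\zeta_{2^{n-1}}^{-1}):\mathbb{Q}]=2^{n-3}$, both $\dim_{\mathbb{Q}}A=\chi(1)^2[K:\mathbb{Q}]$ and the $\mathbb{Q}$-dimension of the cyclic algebra equal $2^{n-1}$, not $2^{n}$; the two dimensions still coincide, which is all your surjectivity-plus-dimension-count isomorphism argument requires.
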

\begin{lem}\label{Lemma 4.1.2}
Let $\rho$ be a faithful irreducible complex representation of $D_{2^n}$ and $\chi$ be its corresponding character. Then $\mathbb{Q}(\chi) = \mathbb{Q}(\zeta + \zeta^{-1}),$ and $m_{\mathbb{Q}}(\rho) = 1$.
The simple component in $\mathbb{Q}[G]$ corresponding to $\chi$ is the cyclic algebra: $$(\mathbb{Q}(\zeta)/\mathbb{Q}(\zeta + \zeta^{-1}), \sigma, 1) = \mathbb{Q}(\zeta).1 + \mathbb{Q}(\zeta).u, u^2 = 1, u \zeta u^{-1} = \zeta^{\sigma} = \zeta^{-1},$$ which is a central simple algebra over $\mathbb{Q}(\zeta + \zeta^{-1})$. This algebra splits, and is isomorphic to $M_{2}(\mathbb{Q}(\zeta + \zeta^{-1})$.
\end{lem}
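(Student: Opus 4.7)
My plan is to build the faithful irreducible complex representation $\rho$ of $D_{2^n}$ by induction from the cyclic subgroup $H=\langle x\rangle$, compute its character and realize it over the character field, and then identify the Wedderburn component by constructing an explicit surjection from $\mathbb{Q}[G]$ onto the given cyclic algebra.

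Set $\zeta=\zeta_{2^{n-1}}$ and let $\lambda:H\to\mathbb{C}^*$ be the faithful linear character $x\mapsto\zeta$. Since $y^{-1}xy=x^{-1}$ and $\zeta\ne\zeta^{-1}$ (for $n\ge 3$; the case $n=2$ is vacuous since $D_4$ is abelian), the $G$-stabilizer of $\lambda$ equals $H$, and Clifford theory yields that $\rho:=\mathrm{Ind}^G_H(\lambda)$ is irreducible of degree $[G:H]=2$; faithfulness follows from the fact that $\chi(x^k)=\zeta^k+\zeta^{-k}=2$ forces $k\equiv 0\pmod{2^{n-1}}$, and $\chi(x^ky)=0\ne 2$. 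From these character values one immediately reads off $\mathbb{Q}(\chi)=\mathbb{Q}(\zeta+\zeta^{-1})$. To pin down the Schur index, I would exhibit an explicit $\mathbb{Q}(\zeta+\zeta^{-1})$-realization of $\rho$: send $x$ to the companion matrix of $t^2-(\zeta+\zeta^{-1})t+1$ (whose eigenvalues are $\zeta,\zeta^{-1}$) and $y$ to a suitable involution conjugating $\rho(x)$ to $\rho(x)^{-1}$; a direct $2\times 2$ verification of the dihedral relations confirms this gives a representation over the character field, whence $m_{\mathbb{Q}}(\rho)=1$.

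To identify the Wedderburn component $A_\mathbb{Q}(\chi)$ with the cyclic algebra $A=(\mathbb{Q}(\zeta)/\mathbb{Q}(\zeta+\zeta^{-1}),\sigma,1)$, I would define a $\mathbb{Q}$-algebra map
$$\phi:\mathbb{Q}[D_{2^n}]\longrightarrow A,\qquad x\mapsto\zeta,\ y\mapsto u.$$
The defining relations $\zeta^{2^{n-1}}=1$, $u^2=1$ and $u\zeta u^{-1}=\zeta^\sigma=\zeta^{-1}$ all hold in $A$, so $\phi$ is well-defined; its image contains $\zeta$ and $u$, so it is surjective. Since $A$ is simple, $\ker\phi$ is a maximal two-sided ideal of the semisimple algebra $\mathbb{Q}[G]$, hence equals the sum of all but one simple component. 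Reading off the character of the representation $\phi$ induces on $A$, viewed as a left module over itself, identifies the surviving component with $A_\mathbb{Q}(\chi)$, so $A_\mathbb{Q}(\chi)\cong A$. The dimension count $\dim_\mathbb{Q} A=4\cdot[\mathbb{Q}(\zeta+\zeta^{-1}):\mathbb{Q}]=\chi(1)^2[\mathbb{Q}(\chi):\mathbb{Q}]$ is consistent with $m_\mathbb{Q}(\rho)=1$. Finally, since $1=N_{\mathbb{Q}(\zeta)/\mathbb{Q}(\zeta+\zeta^{-1})}(1)$ is trivially a norm, Proposition \ref{cyclic} yields $A\cong M_2(\mathbb{Q}(\zeta+\zeta^{-1}))$.

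The main subtlety I expect is verifying that $\phi$ lands in the single simple component $A_\mathbb{Q}(\chi)$ rather than in a proper sum of components; simplicity of the target $A$ together with the character match against $\chi$ on the companion-matrix realization resolves this cleanly, and once that is in place the splitting via Proposition \ref{cyclic} is immediate.
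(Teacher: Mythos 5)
Your proof is correct. The paper states this lemma as well known and gives no proof, but your argument --- inducing a faithful linear character of $\langle x\rangle$, reading off $\mathbb{Q}(\chi)=\mathbb{Q}(\zeta+\zeta^{-1})$ from the character values, realizing $\rho$ by explicit $2\times 2$ matrices over the character field to get $m_{\mathbb{Q}}(\rho)=1$, and identifying the simple component via the surjection $x\mapsto\zeta$, $y\mapsto u$ onto the cyclic algebra, which splits by Proposition \ref{cyclic} since $1$ is trivially a norm --- is precisely the crossed-product machinery the paper itself deploys (via Yamada's theorem) in Sections \ref{Section 3}, \ref{Section 6} and \ref{Section 7}. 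The only point worth making explicit is that an arbitrary faithful irreducible complex representation of $D_{2^n}$ equals $\mathrm{Ind}_{\langle x\rangle}^{G}(\lambda^{j})$ for some odd $j$; these are all Galois conjugate over $\mathbb{Q}$, hence have the same character field and determine the same Wedderburn component, so your computation for $j=1$ covers the general case.
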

\begin{lem}\label{Lemma 4.1.3}
Let $\rho$ be a faithful irreducible complex representation of $SD_{2^n}$ and $\chi$ be its corresponding character. Then $\mathbb{Q}(\chi) = \mathbb{Q}(\zeta -  \zeta^{-1}),$ and
$m_{\mathbb{Q}}(\rho) = 1$. The simple component in $\mathbb{Q}[G]$ corresponding to $\chi$ is the cyclic algebra: $$(\mathbb{Q}(\zeta)/\mathbb{Q}(\zeta - \zeta^{-1}), \tau, 1) = \mathbb{Q}(\zeta).1 + \mathbb{Q}(\zeta).u, u^2 = 1, u \zeta u^{-1} = \zeta^{\tau} = \zeta^{-1 + 2^{n-2}},$$ which is a central simple algebra over $\mathbb{Q}(\zeta - \zeta^{-1})$. This cyclic algebra splits and is isomorphic to $M_{2}(\mathbb{Q}(\zeta - \zeta^{-1})$.
\end{lem}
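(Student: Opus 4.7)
The plan is to build the faithful irreducible $\mathbb{Q}$-representation of $SD_{2^n}$ by Clifford induction from $\langle x\rangle$ and then read off the character field and Wedderburn component directly, in close analogy with Lemmas \ref{Lemma 4.1.1} and \ref{Lemma 4.1.2}. Set $A=\langle x\rangle\cong C_{2^{n-1}}$, normal of index $2$, and take the faithful linear character $\lambda:x\mapsto \zeta$ with $\zeta=\zeta_{2^{n-1}}$. Since $\lambda^{y}$ sends $x$ to $\zeta^{-1+2^{n-2}}=-\zeta^{-1}\neq\zeta$, the inertia of $\lambda$ in $G$ is $A$ itself, so Clifford theory delivers $\rho:=\mathrm{Ind}_{A}^{G}\lambda$ irreducible of degree $2$; and because $Z(SD_{2^n})\subset A$, any normal subgroup meeting $A$ trivially is trivial, forcing $\ker\rho=1$. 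Every faithful irreducible $\overline{\mathbb{Q}}$-representation is Galois conjugate to $\rho$, so it is enough to analyse this one.

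Next I would compute the character: $\chi(x^iy)=0$ and $\chi(x^i)=\zeta^i+\zeta^{(-1+2^{n-2})i}$, so using $\zeta^{2^{n-2}}=-1$ one obtains $\chi(x)=\zeta-\zeta^{-1}$, hence $\mathbb{Q}(\chi)\supseteq\mathbb{Q}(\zeta-\zeta^{-1})$. For the reverse inclusion, note that $\chi|_{A}=\lambda+\lambda^{\tau}$, where $\tau\in\mathrm{Gal}(\mathbb{Q}(\zeta)/\mathbb{Q})$ is the automorphism $\zeta\mapsto\zeta^{-1+2^{n-2}}$ induced by conjugation by $y$, so every value of $\chi$ lies in $\mathbb{Q}(\zeta)^{\langle\tau\rangle}$. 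The equality $\mathbb{Q}(\zeta-\zeta^{-1})=\mathbb{Q}(\zeta)^{\langle\tau\rangle}$ then follows from the factorisation $\zeta^{2a}-\zeta^{a+1}+\zeta^{a-1}-1=(\zeta^{a+1}+1)(\zeta^{a-1}-1)$, which shows that the stabiliser of $\zeta-\zeta^{-1}$ in $\mathrm{Gal}(\mathbb{Q}(\zeta)/\mathbb{Q})$ is exactly $\langle\tau\rangle$.

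To locate the Wedderburn component, let $e'\in\mathbb{Q}[A]$ be the primitive central idempotent with $\mathbb{Q}[A]e'\cong\mathbb{Q}(\zeta)$ via $x\mapsto\zeta$. Conjugation by $y$ permutes the primitive idempotents of $\mathbb{Q}(\zeta)[A]$ that sum to $e'$, so $e'$ is $G$-invariant and hence central in $\mathbb{Q}[G]$. From $\mathbb{Q}[G]=\mathbb{Q}[A]\oplus\mathbb{Q}[A]y$ one reads
\begin{equation*}
\mathbb{Q}[G]e' \;=\; \mathbb{Q}(\zeta)\cdot 1 \;\oplus\; \mathbb{Q}(\zeta)\cdot u, \qquad u:=ye',
\end{equation*}
with $u\zeta u^{-1}=\zeta^{\tau}$ from conjugation in $G$ and $u^{2}=1$ from $y^{2}=1$ in $SD_{2^n}$. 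This is by construction the cyclic algebra $(\mathbb{Q}(\zeta)/\mathbb{Q}(\zeta-\zeta^{-1}),\tau,1)$, and Clifford theory identifies it with $A_{\mathbb{Q}}(\chi)$ since its absolutely irreducible constituents are exactly the Galois conjugates of $\rho$.

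Splitting is then immediate from Proposition \ref{cyclic}: the scalar $1$ is trivially a norm, so $A_{\mathbb{Q}}(\chi)\cong M_{2}(\mathbb{Q}(\zeta-\zeta^{-1}))$ and $m_{\mathbb{Q}}(\rho)=1$. Equivalently, $u^{2}=1$ with $u\notin\{\pm 1\}$ produces nontrivial orthogonal idempotents $(1\pm u)/2$, so this degree-$2$ central simple algebra cannot be a division algebra. The only genuinely technical step is the field-theoretic identification $\mathbb{Q}(\zeta-\zeta^{-1})=\mathbb{Q}(\zeta)^{\langle\tau\rangle}$; the rest is routine bookkeeping, and the structural point distinguishing this case from the quaternion case of Lemma \ref{Lemma 4.1.1} is that $y^{2}=1$ in $SD_{2^n}$ rather than $y^{2}=x^{2^{n-2}}=-1$, which turns the Hamilton-quaternion obstruction $u^{2}=-1$ into the trivial obstruction $u^{2}=1$.
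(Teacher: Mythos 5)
Your proof is correct; note that the paper itself offers no proof of Lemma \ref{Lemma 4.1.3} (it is dismissed as ``well-known''), so there is nothing to diverge from, and your argument is exactly the standard one the paper uses elsewhere (Clifford induction from the cyclic maximal abelian normal subgroup, identification of the component $\mathbb{Q}[G]e'$ as the crossed product $(\mathbb{Q}(\zeta),\mathrm{Gal},\beta)$ as in Theorem \ref{Theorem 7.0.1}, and splitting via Proposition \ref{cyclic}). The one step genuinely worth writing down is the one you isolate, namely $\mathbb{Q}(\zeta-\zeta^{-1})=\mathbb{Q}(\zeta)^{\langle\tau\rangle}$ via the factorisation $(\zeta^{a+1}+1)(\zeta^{a-1}-1)$, and that computation checks out.
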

\begin{remark}\label{Remark 4.1.4}
Let $F$ be any arbitrary field of characteristic $0$. The following hold true.
\begin{enumerate}
\item[(i)] If $G = {Q}_{2^{n}}, n \geq 3$, and $\rho$ is a faithful irreducible $\overline{F}$-representation of $G$ with the character $\chi$, then
$A_{F}(\chi) \cong \mathbb{H}(\mathbb{Q}(\zeta + \zeta^{-1}))\otimes_{\mathbb{Q}(\zeta + \zeta^{-1})}{F(\zeta + \zeta^{-1})}.$
\item[(ii)] If $G = {D}_{2^{n}}, n \geq 2$, and $\rho$ is a faithful irreducible $\overline{F}$-representation of $G$ with the character $\chi$, then
$A_{F}(\chi) \cong M_{2}(\mathbb{Q}(\zeta + \zeta^{-1}))\otimes_{\mathbb{Q}(\zeta + \zeta^{-1})}{F(\zeta + \zeta^{-1})}.$
\item[(iii)] If $G = {SD}_{2^{n}}, n \geq 4$, and $\rho$ is a faithful irreducible $\overline{F}$-representation of $G$ with the character $\chi$, then
$A_{F}(\chi) \cong M_{2}(\mathbb{Q}(\zeta - \zeta^{-1}))\otimes_{\mathbb{Q}(\zeta - \zeta^{-1})}{F(\zeta - \zeta^{-1})}.$
\end{enumerate}
\end{remark}
\section{Faithful Irreducible Representations of Metabelian Groups}\label{Section 5}
In this section, we state some results on faithful irreducible
representations of metabelian groups that are used in the proofs of
Theorem \ref{main}. These results are proved in \cite{Rahul-2021}.

\noindent{\it Hypothesis}: {\it $G$ is a metabelian group, $K$ is a maximal abelian normal subgroup of $G$ with $G/K$ abelian (i.e. $K$ is a maximal abelian normal subgroup containing $G^{'}$), and $F$ is a field of characteristic $0$.}
\begin{thm}[\cite{Rahul-2021}, Theorem 3.2]\label{thm4.2}
Let $\rho$ be a faithful irreducible $F$-representation of $G$, $\eta$ an irreducible constituent of $\rho \downarrow^{G}_{K}$, and $I$ the inertia subgroup of $\eta$ in $G$. Then there exists an irreducible $F$-representation $\theta$ of $I$ such that $\theta \uparrow^{G}_{I} \cong \rho$ and $\theta \downarrow^{I}_{K} \cong \eta^{\oplus m}$, the direct sum of $m$ copies of $\eta$, for some $m\ge 1$. Moreover, $m$ is equal to the Schur index (w.r.t. $F$) of any irreducible constituent of $\theta\otimes_{F}{\overline{F}}$.
\end{thm}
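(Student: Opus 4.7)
The plan is to invoke Clifford theory twice: first over $F$ to produce the desired $\theta$, then over $\overline{F}$ to identify the multiplicity $m$ with the Schur index.

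First, I would apply the Clifford--Fong version of Clifford's theorem for semisimple group algebras over an arbitrary field of characteristic $0$. Because $K \trianglelefteq G$, the $F[K]$-module underlying $\rho \downarrow^{G}_{K}$ is semisimple, and its irreducible constituents form a single $G$-orbit under conjugation. Picking the chosen constituent $\eta$ with inertia subgroup $I$, Clifford--Fong yields an irreducible $F$-representation $\theta$ of $I$, unique up to equivalence, such that $\theta \uparrow^{G}_{I} \cong \rho$ and every irreducible $F[K]$-constituent of $\theta \downarrow^{I}_{K}$ is an $I$-conjugate of $\eta$. Since $I$ stabilizes $\eta$ by the very definition of the inertia subgroup, this forces $\theta \downarrow^{I}_{K} \cong \eta^{\oplus m}$ for some integer $m \geq 1$.

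Second, I would identify $m$ with the Schur index. Fix an irreducible constituent $\widetilde{\theta}$ of $\theta \otimes_{F} \overline{F}$; by the defining property of the Schur index,
$$\theta \otimes_{F} \overline{F} \;\cong\; m_{F}(\widetilde{\theta}) \bigoplus_{\sigma} \widetilde{\theta}^{\,\sigma},$$
the sum running over coset representatives $\sigma$ of the stabilizer in $\mathrm{Gal}(\overline{F}/F)$ of the character of $\widetilde{\theta}$. Restricting to $K$, one side equals $m \cdot (\eta \otimes_{F} \overline{F}) = m \bigoplus_{\tau} \widetilde{\eta}^{\,\tau}$, a multiplicity-one direct sum of Galois-conjugate linear characters of the abelian group $K$ (the Schur index of any irreducible representation of an abelian group is $1$). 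On the other side, each $\widetilde{\theta}^{\,\sigma} \downarrow^{I}_{K}$ is analyzed by Clifford's theorem over $\overline{F}$ for $K \trianglelefteq I$; since $I/K$ is abelian (being a subgroup of the abelian quotient $G/K$), every such restriction is a multiplicity-one sum over a single $I$-orbit of linear characters of $K$, and all those characters lie in the Galois orbit of $\widetilde{\eta}$. Matching multiplicities on both sides and using that the $\mathrm{Gal}(\overline{F}/F)$-action and the $I$-conjugation action on the linear characters of $K$ commute, all $I$-orbits covering the Galois orbit $\{\widetilde{\eta}^{\,\tau}\}$ have the same size, and one reads off $m = m_{F}(\widetilde{\theta})$.

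The first paragraph is essentially bookkeeping from standard Clifford--Fong theory, so the principal difficulty is the multiplicity analysis in the second paragraph. The delicate point is verifying that the two commuting actions (the Galois action and the $I$-conjugation action) on the $\overline{F}$-linear characters of $K$ appearing in $\eta \otimes_{F} \overline{F}$ partition the Galois orbit of $\widetilde{\eta}$ into $I$-orbits of uniform size, so that the multiplicity comparison pins down $m$ as the Schur index. The abelian structures of both $K$ and $I/K$ are essential: the former makes $\overline{F}$-irreducibles of $K$ one-dimensional, while the latter trivializes the inner Clifford step over $\overline{F}$, so that no secondary Schur-index obstruction enters at the $K$-to-$I$ level.
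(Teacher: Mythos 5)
The paper does not reprove this theorem (it is quoted from \cite{Rahul-2021}, with only the remark that the proof rests on Clifford's and Schur's theories), so I am judging your argument on its own terms against the supporting lemmas the paper supplies. Your first paragraph is fine: the Clifford correspondence over a field of characteristic $0$ does produce $\theta$ with $\theta\uparrow^G_I\cong\rho$ and $\theta\downarrow^I_K\cong\eta^{\oplus m}$. The gap is in the second paragraph, at the assertion that each $\widetilde{\theta}^{\,\sigma}\downarrow^I_K$ is multiplicity-free \emph{because $I/K$ is abelian}. That implication is false. Take $G$ extraspecial of order $p^3$ and $K=Z(G)$: then $G/K$ is abelian, yet the faithful irreducible $\overline{F}$-representation restricts to $K$ as $p$ copies of a single linear character; the ``inner Clifford step'' is governed by a projective representation of the inertia quotient with a possibly nondegenerate cocycle, and abelianness of that quotient does not trivialize it. In your bookkeeping the true identity is $m=m_F(\widetilde{\theta})\cdot e'\cdot N$, where $e'$ is the multiplicity of a linear constituent $\widetilde{\lambda}$ in $\widetilde{\theta}\downarrow_K$ and $N$ is the number of Galois conjugates $\widetilde{\theta}^{\,\sigma}$ containing $\widetilde{\lambda}$; you need $e'=N=1$, and you have not proved $e'=1$.

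What actually forces $e'=1$ (and then $N=1$, since $\widetilde{\lambda}\uparrow^I_K$ becomes irreducible and distinct Galois conjugates of $\widetilde{\theta}$ are inequivalent) is that the inertia subgroup in $G$ of every linear constituent $\widetilde{\lambda}$ of $\eta\otimes_F\overline{F}$ is exactly $K$. This is where the two hypotheses you never invoke must enter: faithfulness of $\rho$ gives that $\ker\widetilde{\lambda}$ is core-free in $G$, and maximality of $K$ as an abelian normal subgroup gives $C_G(K/\ker\widetilde{\lambda})=K$ (Lemma \ref{lemma3.1}$(i)$; this is the content of Remark \ref{remark3.3}). Your closing sentence attributes the vanishing of the secondary obstruction to the abelianness of $I/K$; it should instead be attributed to faithfulness plus maximality of $K$. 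With that substitution the multiplicity comparison does read off $m=m_F(\widetilde{\theta})$, but as written the argument does not close.
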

The proof is based on Clifford's theory, Schur's theory of group
representations.

The following easy lemma is useful to prove Theorem \ref{main}.
\begin{lem} [\cite{Rahul-2021}, Lemma 3.1]\label{lemma3.1}
If $L$ is a subgroup of $K$ such that $K/L$ is cyclic and ${\rm
core}_{G}{L} = 1$, then
\begin{itemize}
\item[(i)] $C_{G}{(K/L)} = K$;
\item[(ii)] $|K/L| = {\rm exp}(K)$.
\end{itemize}
\end{lem}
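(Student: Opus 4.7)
The plan is to prove (i) by showing $C_G(K/L)\subseteq K$, the reverse inclusion being trivial since $K$ is abelian. The engine of the argument is the following ``$G$-invariance'' identity for commutators with elements of $K$: since $G/K$ is abelian, every $G$-conjugate of $g\in G$ lies in the coset $gK$, so writing $hgh^{-1}=gk_h$ with $k_h\in K$ and using the commutativity of $K$, one computes
\begin{equation*}
[hgh^{-1},k]\;=\;[g,k]\qquad\text{for all }k\in K,\;h\in G.
\end{equation*}
This is the mechanism that lets the core-free hypothesis do its work.

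Given $g\in C_G(K/L)$, by definition $[g,k']\in L$ for every $k'\in K$. Applying this with $k'=h^{-1}kh\in K$ and conjugating the resulting relation by $h$ yields $[hgh^{-1},k]\in hLh^{-1}$; combined with the displayed identity this gives $[g,k]\in L\cap hLh^{-1}$ for every $h\in G$, hence $[g,k]\in\bigcap_{h\in G}hLh^{-1}=\mathrm{core}_G(L)=1$. Thus $g$ centralizes $K$. Now the subgroup $\langle K,g\rangle=K\langle g\rangle$ is abelian (since $g$ commutes with $K$), and its image in the abelian quotient $G/K$ is the cyclic subgroup $\langle gK\rangle$, which is automatically normal there. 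Pulling back, $\langle K,g\rangle$ is a normal abelian subgroup of $G$ containing $K$; by the maximality of $K$ it equals $K$, so $g\in K$. This proves (i).

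For (ii), the inequality $|K/L|\le\mathrm{exp}(K)$ is immediate because $K/L$ is a cyclic quotient of $K$, so its order (equal to its exponent) divides $\mathrm{exp}(K)$. For the reverse inequality, set $m=|K/L|$; then $k^m\in L$ for every $k\in K$, so the subgroup $K^m=\{k^m:k\in K\}$ (a subgroup because $K$ is abelian) is contained in $L$. Since $K^m$ is characteristic in $K$ and $K$ is normal in $G$, $K^m$ is normal in $G$, and hence $K^m\subseteq\mathrm{core}_G(L)=1$. Thus $k^m=1$ for every $k\in K$, i.e., $\mathrm{exp}(K)$ divides $m$, giving $\mathrm{exp}(K)\le m$. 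Combining the two inequalities proves (ii).

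The delicate step is the commutator identity $[hgh^{-1},k]=[g,k]$, which uses both structural hypotheses simultaneously: $G/K$ abelian to place $hgh^{-1}$ in the coset $gK$, and $K$ abelian to absorb the ensuing $K$-factor. With that identity in hand, the core-free assumption upgrades ``trivial action on $K/L$'' to ``trivial action on $K$,'' and the maximality of $K$ together with the abelianness of $G/K$ then promotes this further to membership in $K$; the exponent statement (ii) is then an easy consequence of applying the same core-free/characteristic-subgroup principle to the subgroup $K^m$.
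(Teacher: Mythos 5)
Your proof is correct: the commutator identity $[hgh^{-1},k]=[g,k]$ (valid because $G'\subseteq K$ and $K$ is abelian), the passage to $\mathrm{core}_G(L)=1$, the maximality argument via the normal abelian subgroup $K\langle g\rangle$, and the $K^m\subseteq\mathrm{core}_G(L)$ step for (ii) are all sound. Note that the paper itself gives no proof of this lemma --- it is quoted from \cite{Rahul-2021} --- so there is nothing to compare against here, but your argument is the natural one and fills that gap self-containedly.
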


\begin{remark}[\cite{Rahul-2021}, Remark 3.3]\label{remark3.3}
In Theorem \ref{thm4.2}, if $F=\overline{F}$, then $\eta$ is linear. Hence $I$ (the inertia subgroup of $\eta$) is $K$. Consequently, $\theta=\eta$ and $\rho\cong\eta\uparrow_K^G$. Thus,  over $\overline{F}$, a faithful irreducible representation of $G$ (if it exists) is induced from a linear representation of $K$.
\end{remark}
\begin{thm}[\cite{Rahul-2021}, Theorem 3.4]\label{theorem3.4}
If $\eta$ and $\eta'$ are irreducible $F$-representations of $K$ with core-free kernels in $G$, then they have the same inertia subgroup in $G$.
\end{thm}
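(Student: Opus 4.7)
The plan is to pass to $\overline{F}$-characters of $K$ and exploit Pontryagin duality for the finite abelian group $K$ together with the hypothesis that $G/K$ is abelian. Let $\widetilde{\eta}$ and $\widetilde{\eta}'$ be irreducible $\overline{F}$-constituents of $\eta\otimes_F\overline{F}$ and $\eta'\otimes_F\overline{F}$ respectively; these are linear characters of $K$ with kernels $L=\ker\widetilde{\eta}$ and $L'=\ker\widetilde{\eta}'$, both core-free in $G$ (since the kernel of the $F$-representation coincides with that of any $\overline{F}$-constituent). By Lemma \ref{lemma3.1}(ii), $|K/L|=|K/L'|=N:=\exp(K)$, so both characters have image the full group $\mu_N$ and hence character field $F(\zeta_N)$. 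Setting $H:=\mathrm{Gal}(F(\zeta_N)/F)\subseteq(\mathbb{Z}/N\mathbb{Z})^{*}$, the Galois orbit of $\widetilde{\eta}$ (resp.\ $\widetilde{\eta}'$) over $F$ is $\{\widetilde{\eta}^{b}:b\in H\}$ (resp.\ $\{(\widetilde{\eta}')^{b}:b\in H\}$), and so
\[
I_\eta=\{g\in G:g\cdot\widetilde{\eta}=\widetilde{\eta}^{b}\text{ for some }b\in H\},
\]
and similarly for $I_{\eta'}$.

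Next I would translate the core-freeness of $L$ into a cyclicity statement: we have
\[
\bigcap_{g\in G}\ker(g\cdot\widetilde{\eta})=\bigcap_{g\in G}g^{-1}Lg=\mathrm{core}_G(L)=1,
\]
and by Pontryagin duality applied to the finite abelian group $K$, this annihilator condition is equivalent to saying that the $G$-orbit of $\widetilde{\eta}$ generates the character group $\widehat{K}$. Consequently $\widehat{K}$ is a cyclic $\mathbb{Z}[G]$-module with generator $\widetilde{\eta}$, so in particular $\widetilde{\eta}'=c\cdot\widetilde{\eta}$ for some $c=\sum_i n_i g_i\in\mathbb{Z}[G]$, where the $\mathbb{Z}[G]$-action is $c\cdot\chi:=\prod_i(g_i\chi)^{n_i}$.

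The final step is to combine this with the abelianness of $G/K$. Since $K$ is abelian it acts trivially on $\widehat{K}$, so the $G$-action on $\widehat{K}$ factors through $G/K$; as $G/K$ is abelian, the image of $\mathbb{Z}[G]$ in $\mathrm{End}(\widehat{K})$ is commutative, and in particular $g$ and $c$ commute as operators on $\widehat{K}$. Moreover, the $G$-action commutes with $b$-th power maps because $G$ acts by group automorphisms of $\widehat{K}$. For $g\in I_\eta$ with $g\cdot\widetilde{\eta}=\widetilde{\eta}^{b}$, these two commutations give
\[
g\cdot\widetilde{\eta}'=g\cdot(c\cdot\widetilde{\eta})=c\cdot(g\cdot\widetilde{\eta})=c\cdot\widetilde{\eta}^{b}=(c\cdot\widetilde{\eta})^{b}=(\widetilde{\eta}')^{b},
\]
so $g\in I_{\eta'}$. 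The reverse inclusion follows by symmetry, proving $I_\eta=I_{\eta'}$.

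The essential point is the commutation $gc=cg$ on $\widehat{K}$, and this is exactly where the hypothesis that $G/K$ is abelian enters in a decisive way: without it, an element $c\in\mathbb{Z}[G/K]$ would not in general commute with the $G$-action on $\widehat{K}$, and the chain of equalities in the last display would fail. Everything else (the passage to $\widetilde{\eta}$, the cyclicity of $\widehat{K}$ as a $\mathbb{Z}[G]$-module, and the uniform description of the common field $F(\zeta_N)$) is a direct application of abelian-group duality and Lemma \ref{lemma3.1}.
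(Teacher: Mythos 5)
Your argument is correct, and it is essentially the route the paper indicates: the paper gives no detailed proof, only the remark that the result ``follows from the proof of Theorem \ref{thm4.2} and the duality between the subgroups of $\mathrm{Hom}(K,\overline{F}^{*})$ and $K$,'' and your reduction to linear $\overline{F}$-constituents, the use of Lemma \ref{lemma3.1}(ii) to get the common character field $F(\zeta_N)$, and the duality step showing $\widehat{K}$ is a cyclic $\mathbb{Z}[G]$-module are exactly that outline carried out in full. Your explicit observation that the $\mathbb{Z}[G]$-action on $\widehat{K}$ factors through the abelian group $G/K$, so that the generator-transporting element $c$ commutes with every $g$, is a clean and complete way to finish the argument the paper leaves implicit.
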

The proof follows from the proof of Theorem \ref{thm4.2}, and the
duality between the subgroups of $\textrm{Hom}(K, \overline{F}^*)$
and $K$ (see \cite{Huppert-1967}, Theorem $6.4$(e))

\section{Proof of Theorem \ref{main}}\label{Section 6}
In this section, we prove the main theorem using the following
theorem of Yamada.
\begin{thm}[\cite{Yamada-1969}, Theorem 2]\label{Theorem 7.0.1}
Let $G$ be a finite group and $K$ a normal subgroup of $G$.
Let $F$ a field of characteristic $0$.
Let $\chi$ be an irreducible $\overline{F}$-character of $G$. Suppose that $\chi = \lambda \uparrow^{G}_{K}$, where $\lambda$ is a linear character of $K$. Define
$$I:=\{ g\in G\,: \,  \mbox{there exists}~\tau(g)\in {\rm Gal}(F(\lambda)/F)~ \mbox{such that}~ \lambda^g=\lambda^{\tau(g)}\}.$$ Let $f_{1}K, f_{2}K, \dots, f_{n}K$ be all the distinct left cosets of $K$ in $I$, and $f_{i}f_{j} = k_{ij}f_{\nu(i,j)}, k_{ij} \in K$. Set $\tau(f_{i}) = \tau_{i}$ and $\beta(\tau_{i}, \tau_{j}) = \lambda(k_{ij}), 1 \leq i, j \leq n$. The following properties hold.
\begin{itemize}
\item[(1)] $\psi = \lambda \uparrow^{I}_{K}$ is an irreducible $\overline{F}$-character of $I$.
\item[(2)] $I/K \cong \{\tau_{1}, \tau_{2}, \dots, \tau_{n}\} \cong \textrm{Gal}(F(\lambda)/F(\psi)) = \textrm{Gal}(F(\lambda)/F(\chi))$.
\item[(3)] $\beta$ is a factor set of $\textrm{Gal}(F(\lambda)/F(\psi)) = {\textrm Gal}(F(\lambda)/F(\chi))$ consisting roots of unity.
\item[(4)] The Wedderburn component $A_{F}(\psi)$ of $F[I]$, which corresponds to $\psi$ is a crossed product algebra $(F(\lambda), \textrm{Gal}(F(\lambda)/F(\psi)), \beta)$.
\item[(5)] $m_{F}(\lambda \uparrow^{G}_{K}) = m_{F}(\lambda \uparrow^{I}_{K})$.
\item[(6)] If $A_{F}(\psi) \cong M_{r}(D)$, for a finite dimentonal division algebra $D$ over $F$ and for an integer $r$, then $A_{F}(\chi) \cong M_{rt}(D),$ $t = [G : I]$.
\end{itemize}
\end{thm}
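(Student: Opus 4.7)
The plan is to establish the six assertions in order, with the hard work concentrated in (2), (4), and (6). Assertions (1)--(4) are Clifford-theoretic and Galois-descent statements that can be extracted once the map $\tau:I\to {\rm Gal}(F(\lambda)/F)$ is understood; (5) will follow immediately from (6), whose proof is a Peirce-decomposition argument for the idempotent $e_F(\chi)$.

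For (1), since $\chi=\lambda\uparrow_K^G$ is irreducible as an $\overline{F}$-character, Mackey's irreducibility criterion forces the $G$-conjugation stabilizer of $\lambda$ to equal $K$; restricting to $I$, Clifford's theorem then gives that $\psi=\lambda\uparrow_K^I$ is irreducible. For (2), I would show that $\tau(g)$ is well-defined (uniqueness because the Galois action on $F(\lambda)$ is faithful) and that $\tau$ is a homomorphism with kernel $K$, so $I/K$ embeds into ${\rm Gal}(F(\lambda)/F)$ with image exactly $\{\tau_1,\ldots,\tau_n\}$. The identification with ${\rm Gal}(F(\lambda)/F(\psi)) = {\rm Gal}(F(\lambda)/F(\chi))$ then comes from the equalities $F(\psi)=F(\chi)=F(\lambda)^{\{\tau_i\}}$: on one hand each $\tau_i$ permutes the linearly independent summands of $\psi|_K=\sum_i\lambda^{f_i}$ and likewise of $\chi|_K$; conversely, any $\sigma$ fixing $\psi$ sends $\lambda$ to some $\lambda^{f_j}$ and hence equals $\tau_j$, and the corresponding argument for $\chi$ uses that distinct cosets of $I$ in $G$ produce characters whose $F$-Galois orbits are disjoint from $\lambda$'s.

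For (3), the cocycle identity on $\beta$ is obtained by applying $\lambda$ to the associativity relation $(f_if_j)f_k=f_i(f_jf_k)$ in $G$, using that conjugation by $f_i$ on $K$ twists $\lambda$ via $\tau_i$; the values are roots of unity because $\lambda$ is a linear character. For (4), the crossed-product isomorphism $A_F(\psi)\cong (F(\lambda),{\rm Gal}(F(\lambda)/F(\psi)),\beta)$ is realized by sending $u_{\tau_i}\mapsto f_ie_F(\psi)$ and embedding $F(\lambda)\hookrightarrow A_F(\psi)$ as the subfield generated by $\{\lambda(k)e_F(\psi):k\in K\}$; one verifies the action $u_{\tau_i}x=\tau_i(x)u_{\tau_i}$ and the twisting $u_{\tau_i}u_{\tau_j}=\beta(\tau_i,\tau_j)u_{\tau_i\tau_j}$ directly from the definitions, and surjectivity together with a dimension count over $F$ closes the argument.

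For (6), the crucial step is to show that $e_F(\chi)=\sum_{gI\in G/I} ge_F(\psi)g^{-1}$, a decomposition into $t=[G:I]$ mutually orthogonal idempotents. This rests on the observation that the $G$-conjugates $\psi^g$ with $g$ ranging over $G/I$ lie in distinct $F$-Galois orbits, which is exactly what forces $I$ to be the correct subgroup. The Peirce decomposition along these idempotents then gives $F[G]e_F(\chi)=\bigoplus_{i,j}g_iF[I]e_F(\psi)g_j^{-1}\cong M_t(A_F(\psi))$, whence $A_F(\psi)\cong M_r(D)$ yields $A_F(\chi)\cong M_{rt}(D)$; (5) is immediate by comparing division parts. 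The main obstacle is precisely this idempotent formula: it demands a careful analysis of how the $G$-action on characters of $K$ intertwines with the $F$-Galois action on character values, which is the heart of Yamada's construction of $I$.
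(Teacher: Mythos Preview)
The paper does not prove this theorem at all: it is quoted verbatim as Theorem~2 of Yamada \cite{Yamada-1969} and used as a black box in the proof of the main result. There is therefore no ``paper's own proof'' to compare your proposal against.

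That said, your outline is a sound reconstruction of the standard argument. The identification in~(2) of $I/K$ with ${\rm Gal}(F(\lambda)/F(\psi))$ via the fixed-field computation $F(\psi)=F(\lambda)^{\{\tau_i\}}$ is exactly the right mechanism, and your crossed-product realization in~(4) via $u_{\tau_i}\mapsto f_i e_F(\psi)$ is the natural one. The one place to be careful is the claim in~(6) that the conjugates $g e_F(\psi) g^{-1}$ for $g$ ranging over $G/I$ are mutually orthogonal and sum to $e_F(\chi)$: this requires knowing that $I$ is precisely the stabilizer of the $F$-orbit of $\lambda$ under the combined $G$-conjugation and Galois actions, which is the defining property of $I$ but deserves an explicit line. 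Once that is in place, the Peirce decomposition $A_F(\chi)\cong M_t(A_F(\psi))$ and hence~(5) follow as you describe.
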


In addition, we recall the following results from the book [20]
which are also used in our proof.\\ For a prime $p$ we denote by
$v_{p}(n)$ the valuation of $n$ at $p$; that is  the maximum $p$-th
power $p^{v_{p}(n)}$ dividing $n$.
\begin{lem}[\cite{Jespers-2016}, Lemma 13.5.2]\label{Lemm 5.2}
Let $G$ be a finite $p$-group with a maximal abelian subgroup which is cyclic and normal in $G$. Then $G$ is isomorphic to the one of the following groups:\\
$P_{1} = \langle{a, b ~|~ a^{p^n} = b^{p^k} = 1, b^{-1}ab = a^{r}}\rangle,$
with either $v_{p}(r-1) = n - k$ or $p = 2$ and $r \not \equiv 1$ {\rm mod} $4$,\\
$P_{2} = \langle{a, b, c ~|~ a^{2^n} = b^{2^k} = c^2 = 1, bc = cb, b^{-1}ab = a^{r}, c^{-1}ac = a^{-1}}\rangle,$ with $r \equiv 1$ {\rm mod} $4$,\\
$P_{3} = \langle{a, b, c ~|~ a^{2^n} = b^{2^k} = 1, c^2 = a^{2^{n-1}}, bc = cb, b^{-1}ab = a^{r}, c^{-1}ac = a^{-1}}\rangle,$
with $r \equiv 1$ {\rm mod} $4$.
\end{lem}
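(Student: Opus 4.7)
The plan is to exploit the fact that a cyclic normal maximal abelian subgroup $A=\langle a\rangle$ of order $p^n$ must be self-centralizing, and then to read off the structure of $G$ from the well-understood group $\mathrm{Aut}(A)\cong(\mathbb{Z}/p^n\mathbb{Z})^*$. First I would observe that any $g\in C_G(A)$ generates together with $A$ an abelian subgroup containing $A$; maximality of $A$ forces $g\in A$, hence $C_G(A)=A$. Conjugation then gives a faithful embedding $G/A\hookrightarrow\mathrm{Aut}(A)$, and since $G$ is a $p$-group, $G/A$ lies inside a Sylow $p$-subgroup of $(\mathbb{Z}/p^n\mathbb{Z})^*$.

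For odd $p$, the Sylow $p$-subgroup of $(\mathbb{Z}/p^n\mathbb{Z})^*$ is cyclic, so $G/A$ is cyclic of some order $p^k$. Any lift $b$ of a generator satisfies $b^{-1}ab=a^r$ where $r$ has multiplicative order $p^k$ modulo $p^n$, equivalently $v_p(r-1)=n-k$. Writing $b^{p^k}=a^j$, I would replace $b$ by $ba^t$ and expand $(ba^t)^{p^k}$ using the commutation relation together with the identity $1+r+r^2+\cdots+r^{p^k-1}\equiv 0\pmod{p^n}$ (which follows from $v_p(r-1)=n-k$ and the fact that for odd $p$ one has $v_p(r^i-1)=v_p(r-1)+v_p(i)$). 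Solving the resulting congruence for $t$ arranges $b^{p^k}=1$, giving presentation $P_1$.

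For $p=2$ with $n\geq 3$ one has $(\mathbb{Z}/2^n\mathbb{Z})^*\cong\{\pm 1\}\times\langle 5\rangle\cong C_2\times C_{2^{n-2}}$, so $G/A$ is either cyclic or of rank two. In the cyclic case the same strategy applies; the only genuine novelty is that the generator $r$ of the image may fail to be $\equiv 1\pmod 4$ (the ``anomalous'' piece coming from the $\{\pm1\}$-factor), and one ends up in $P_1$ with the alternative condition. If $G/A$ has rank two, pick $b\in G$ projecting onto $\langle 5\rangle$ and $c\in G$ projecting onto $\{\pm 1\}$, so $b^{-1}ab=a^r$ with $r\equiv 1\pmod 4$ and $c^{-1}ac=a^{-1}$. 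The $(-1)$-eigenspace of inversion on $A$ is precisely $\langle a^{2^{n-1}}\rangle$, and $c^2$ is fixed by conjugation by $c$, so it must lie in this subgroup; hence $c^2\in\{1,\,a^{2^{n-1}}\}$, the two possibilities corresponding to $P_2$ and $P_3$ respectively. A further modification by powers of $a$ lets one simultaneously arrange $b^{2^k}=1$ and $bc=cb$.

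The principal obstacle is the bookkeeping required to modify the lifts $b$ (and $c$) by elements of $A$ so that their orders and their mutual commutator land on the specific canonical elements $1$ or $a^{2^{n-1}}$, rather than on some unwanted element of $A$. Phrased cohomologically, this amounts to showing that the extension class in $H^2(G/A,A)$ is represented by the displayed cocycle: for odd $p$ the extension always splits, so the class is trivial; for $p=2$ in the rank-two case the class is at worst two-valued, which is exactly the dichotomy $P_2$ versus $P_3$. Once this cocycle computation is carried out the presentations fall out directly, and one recovers the trichotomy in the statement.
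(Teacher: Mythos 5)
The paper itself offers no proof of this lemma: it is imported verbatim from Jespers and del R\'{i}o (\cite{Jespers-2016}, Lemma 13.5.2), so your attempt can only be measured against the standard argument, and your overall strategy --- maximality forces $C_G(\langle a\rangle)=\langle a\rangle$, hence $G/\langle a\rangle$ embeds into the Sylow $p$-subgroup of $(\mathbb{Z}/p^n\mathbb{Z})^*$, after which one normalizes lifts of generators --- is indeed that standard route. However, two steps as written would fail. The congruence $1+r+\cdots+r^{p^k-1}\equiv 0\pmod{p^n}$ is false: writing $S$ for this sum, lifting-the-exponent gives $v_p(S)=v_p(r^{p^k}-1)-v_p(r-1)=n-(n-k)=k$, not $n$; and if your congruence held, then $(ba^t)^{p^k}=b^{p^k}a^{tS}=b^{p^k}$ for every $t$, so the displacement trick could not alter $b^{p^k}$ at all. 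The argument that actually works uses $v_p(S)=k$ exactly, together with the observation that $b^{p^k}$ commutes with $b$ and hence lies in $\langle a^{p^k}\rangle$, which is precisely the image of $t\mapsto a^{tS}$; only then is $j+tS\equiv 0\pmod{p^n}$ solvable in $t$.

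More seriously, your assertion that for $p=2$ the cyclic-quotient case always "ends up in $P_1$" is wrong. The generalized quaternion group $Q_{2^m}$ satisfies the hypotheses with $\langle a\rangle$ of index $2$ and cyclic quotient, yet the extension does not split, because the unique involution of $Q_{2^m}$ lies inside $\langle a\rangle$ and so $\langle a\rangle$ has no complement; this group appears in the list as $P_3$ with $k=0$, not as $P_1$ (where $b^{2^k}=1$ forces a splitting). Concretely, for $r\not\equiv 1\pmod 4$ the group $H^2(C_{2^k},\langle a\rangle)$ can be nontrivial (e.g.\ for $r=-1$ it has order $2$), and the nonsplit class is exactly the quaternion-type group; your sketch silently discards it. Finally, the simultaneous normalization in the rank-two case --- arranging $b^{2^k}=1$, $bc=cb$, and $c^2\in\{1,a^{2^{n-1}}\}$ at once --- is the real content of the lemma and is only asserted; the claim that the relevant cohomology class "is at worst two-valued" is precisely what has to be proved, not a reformulation of it.
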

The two lemmata below can be deduced readily from the proof of
Theorem 13.5.3 in \cite{Jespers-2016}.
\begin{lem}\label{Lemma 6.0.2}
Let $G = P_{2}$. If $\rho$ is a faithful irreducible
$\mathbb{Q}$-representation of $G$ then the corresponding Wedderburn
component in $\mathbb{Q}[G]$ is isomorphic to $M_{2^{k +
1}}(\mathbb{Q}(\zeta_{2^{n-k}} + \zeta^{-1}_{2^{n-k}}))$.
\end{lem}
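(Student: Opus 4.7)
The plan is to apply Yamada's theorem (Theorem \ref{Theorem 7.0.1}) with $F=\mathbb{Q}$ to a faithful linear character of the cyclic maximal abelian normal subgroup $K=\langle a\rangle\cong C_{2^n}$ of $G=P_2$, and then observe that the resulting Yamada factor set is trivial. Let $\lambda$ be the linear character of $K$ given by $\lambda(a)=\zeta_{2^n}$. Since $K$ is a maximal abelian normal subgroup and $\lambda$ is faithful on $K$, for every $g\in G$ one has $\lambda^{g}=\lambda$ iff $g$ centralizes $K$, so the ordinary inertia subgroup of $\lambda$ in $G$ equals $C_G(K)=K$. Hence by Remark \ref{remark3.3} the irreducible $\overline{\mathbb{Q}}$-character $\chi:=\lambda\uparrow_K^G$ corresponds to a faithful irreducible $\mathbb{Q}$-representation $\rho$, and it suffices to identify the Wedderburn component $A_{\mathbb{Q}}(\chi)$.

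To apply Yamada, I would first show that the Yamada-inertia $I$ equals $G$: the character field is $F(\lambda)=\mathbb{Q}(\zeta_{2^n})$ with $\mathrm{Gal}(F(\lambda)/F)\cong(\mathbb{Z}/2^n\mathbb{Z})^{*}$, and every $G$-conjugate $\lambda^{g}$ is again a faithful linear character of $K$, hence a Galois conjugate of $\lambda$. The map $g\mapsto\tau(g)$ sends $b\mapsto(\zeta\mapsto\zeta^{r^{-1}})$ and $c\mapsto(\zeta\mapsto\zeta^{-1})$, so its image is $H:=\langle r,-1\rangle\subseteq(\mathbb{Z}/2^n)^{*}$. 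Using the structural constraint $v_2(r-1)=n-k$ forced by the presentation of $P_2$ (so that $r$ has order exactly $2^k$ in the subgroup $1+4\mathbb{Z}/2^n$), $\langle r\rangle$ fixes $\zeta_{2^n}^{2^k}=\zeta_{2^{n-k}}$, while $\langle -1\rangle$ acts by complex conjugation. Thus $H$ fixes $\mathbb{Q}(\zeta_{2^{n-k}}+\zeta_{2^{n-k}}^{-1})$, and a degree count (both of degree $2^{n-k-2}$ over $\mathbb{Q}$) yields $\mathbb{Q}(\chi)=\mathbb{Q}(\zeta_{2^{n-k}}+\zeta_{2^{n-k}}^{-1})$ by Theorem \ref{Theorem 7.0.1}(2).

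The crucial step is the triviality of the factor set $\beta$. Take coset representatives
\[
\{\,f_{i,j}:=b^{i}c^{j}\ :\ 0\le i<2^{k},\ j\in\{0,1\}\,\}
\]
for $K$ in $G$. Because $bc=cb$, $b^{2^{k}}=1$, and (crucially) $c^{2}=1$, a direct computation gives
\[
f_{i,j}\,f_{i',j'}=b^{i+i'}c^{j+j'}=f_{(i+i')\bmod 2^{k},\,(j+j')\bmod 2},
\]
so no element of $K$ appears as a correction: every $k_{ij,i'j'}=1$ and $\beta\equiv 1$. By Yamada's theorem part (4), combined with the standard fact recalled in Section \ref{Section 3} that a classical crossed product with trivial factor set is a full matrix algebra over the fixed field, we conclude
\[
A_{\mathbb{Q}}(\chi)\cong\bigl(\mathbb{Q}(\zeta_{2^{n}}),G/K,1\bigr)\cong M_{2^{k+1}}\!\bigl(\mathbb{Q}(\zeta_{2^{n-k}}+\zeta_{2^{n-k}}^{-1})\bigr).
\]
The main obstacle is precisely the verification that $\beta$ is trivial: this is the feature distinguishing $P_2$ from $P_3$, because in $P_3$ the product $(b^{i}c)(b^{i'}c)=b^{i+i'}a^{2^{n-1}}$ forces $\lambda(a^{2^{n-1}})=-1$ to appear in $\beta$, producing a quaternionic (non-split) factor rather than a pure matrix algebra.
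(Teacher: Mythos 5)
Your proof is correct, and it is genuinely different in character from what the paper does: the paper gives no argument at all for this lemma, deferring to the proof of Theorem 13.5.3 in the book of Jespers and del R\'{i}o, whereas you give a self-contained derivation using Yamada's Theorem \ref{Theorem 7.0.1}, which is the same machine the paper uses everywhere else in Sections \ref{Section 6} and \ref{Section 7}. Your key points all check out: the ordinary inertia group of a faithful $\lambda$ is $C_G(K)=K$, so $\chi=\lambda\uparrow_K^G$ is a faithful irreducible $\overline{\mathbb{Q}}$-character; the Yamada inertia group is all of $G$ because every $G$-conjugate of $\lambda$ is a Galois conjugate; the image $H=\langle r,-1\rangle$ has order $2^{k+1}$ since $r\equiv 1 \bmod 4$ keeps $-1$ out of $\langle r\rangle$, and its fixed field is $\mathbb{Q}(\zeta_{2^{n-k}}+\zeta_{2^{n-k}}^{-1})$ by the degree count you give; and the relations $bc=cb$, $b^{2^k}=1$, $c^2=1$ make every correction element $k_{ij}$ equal to $1$, so the factor set is trivial and the crossed product splits as $M_{2^{k+1}}(\mathbb{Q}(\chi))$. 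Two small points deserve a sentence in a final write-up. First, the condition $v_2(r-1)=n-k$ is not listed in the paper's statement of Lemma \ref{Lemm 5.2} for $P_2$; you are right that it is forced by the requirement that $\langle a\rangle$ be self-centralizing, but you should say explicitly that this is where it comes from. Second, the lemma speaks of an arbitrary faithful irreducible $\mathbb{Q}$-representation while you compute the component for one induced character; this is harmless because every faithful linear character of the cyclic group $K$ is a Galois conjugate of your $\lambda$ (and by Remark \ref{remark3.3} every faithful irreducible $\overline{\mathbb{Q}}$-representation is induced from such a character), so $P_2$ has a unique faithful irreducible $\mathbb{Q}$-representation and a unique corresponding Wedderburn component, but this uniqueness should be stated. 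Your closing comparison with $P_3$, where $c^2=a^{2^{n-1}}$ forces $\beta$ to take the value $-1$ and produces the quaternionic component of Lemma \ref{Lemma 5.5}, is exactly the right way to see why the two cases diverge.
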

\begin{lem}\label{Lemma 5.5}
Let $G = P_{3}$. If $\rho$ is a faithful irreducible
$\mathbb{Q}$-representation of $G$ then the corresponding Wedderburn
component in $\mathbb{Q}[G]$ is isomorphic to
$M_{2^{k}}(\mathbb{H}(\mathbb{Q}(\zeta_{2^{n-k}} +
\zeta^{-1}_{2^{n-k}})))$, where
$\mathbb{H}(\mathbb{Q}(\zeta_{2^{n-k}} + \zeta^{-1}_{2^{n-k}}))$ is
the quaternion division algebra over $\mathbb{Q}(\zeta_{2^{n-k}} +
\zeta^{-1}_{2^{n-k}})$.
\end{lem}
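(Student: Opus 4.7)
The plan is to identify explicitly the faithful irreducible $\overline{\mathbb{Q}}$-character of $G=P_3$ as an induced character from the cyclic maximal abelian normal subgroup $K := \langle a\rangle$, then use Yamada's Theorem \ref{Theorem 7.0.1} to realize the Wedderburn component as a classical crossed product, and finally recognize this crossed product as the tensor product of a degree-$2^k$ split cyclic algebra and the quaternion algebra $\mathbb{H}(F)$. By Remark \ref{remark3.3}, a faithful irreducible $\overline{\mathbb{Q}}$-character of $G$ has the form $\chi = \lambda\uparrow^G_K$ with $\lambda$ a faithful linear character of $K$; I fix $\lambda(a) = \zeta := \zeta_{2^n}$. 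The hypothesis $r\equiv 1\pmod 4$ combined with $\langle a\rangle$ being maximal abelian forces $v_2(r-1) = n-k$, so $r$ has order $2^k$ in $(\mathbb{Z}/2^n\mathbb{Z})^*$ and $\langle r\rangle \subseteq \langle 5\rangle$. Moreover $a^{2^{n-1}}$ is central in $G$, since $r$ is odd and $(-1)\cdot 2^{n-1}\equiv 2^{n-1}\pmod{2^n}$.

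A direct calculation yields $gag^{-1} = a^{(-1)^l r^{-j}}$ for $g = a^i b^j c^l$, so every $\lambda^g$ is a Galois conjugate of $\lambda$, and the Yamada inertia-like subgroup is $I = G$. The resulting isomorphism $G/K \to \mathrm{Gal}(\mathbb{Q}(\zeta)/\mathbb{Q}(\chi))$ sends $\bar b\mapsto \sigma:\zeta\mapsto\zeta^{r^{-1}}$ and $\bar c\mapsto \rho:\zeta\mapsto\zeta^{-1}$, and the Galois group is $\langle\sigma\rangle\times\langle\rho\rangle\cong C_{2^k}\times C_2$. Computing the fixed field via the tower $\mathbb{Q}\subseteq \mathbb{Q}(\zeta_{2^{n-k}}+\zeta_{2^{n-k}}^{-1})\subseteq \mathbb{Q}(\zeta_{2^{n-k}})\subseteq \mathbb{Q}(\zeta)$ gives $\mathbb{Q}(\chi) = \mathbb{Q}(\zeta_{2^{n-k}}+\zeta_{2^{n-k}}^{-1}) =: F$. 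With coset representatives $\{b^j c^l\}$ and using the centrality of $a^{2^{n-1}}$, the only non-trivial product is
\[
(b^{j_1}c)(b^{j_2}c) \;=\; b^{j_1+j_2}\,c^2 \;=\; a^{2^{n-1}}\cdot b^{j_1+j_2},
\]
so the Yamada factor set satisfies $\beta(\rho,\rho) = \lambda(a^{2^{n-1}}) = -1$ and $\beta = 1$ on all other pairs. By Theorem \ref{Theorem 7.0.1}(4), $A_\mathbb{Q}(\chi) \cong (L,\langle\sigma\rangle\times\langle\rho\rangle,\beta)$ with $L = \mathbb{Q}(\zeta)$, generated over $F$ by $L, u:=u_\sigma, v:=u_\rho$ with relations $u^{2^k}=1,\ v^2=-1,\ uv=vu,\ u\zeta u^{-1}=\zeta^{r^{-1}},\ v\zeta v^{-1}=\zeta^{-1}$.

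The crux is to recognize this crossed product. Set $L_\sigma := L^{\langle\sigma\rangle} = \mathbb{Q}(\zeta_{2^{n-k}})$ and $L_\rho := L^{\langle\rho\rangle} = \mathbb{Q}(\zeta+\zeta^{-1})$, and define the $F$-subalgebras
\[
B_1 \;:=\; \bigoplus_{i=0}^{2^k-1} L_\rho\, u^i,\qquad B_2 \;:=\; L_\sigma \oplus L_\sigma v.
\]
One checks that $B_1$ is the cyclic algebra $(L_\rho/F,\sigma|_{L_\rho},1)$, which by Proposition \ref{cyclic} splits as $M_{2^k}(F)$; and that $B_2$ is the cyclic algebra $(L_\sigma/F,\rho|_{L_\sigma},-1)$, which is exactly Hamilton's quaternion algebra $\mathbb{H}(F)$ in the sense of Lemma \ref{Lemma 4.1.1}, because $\rho$ restricts to complex conjugation on $L_\sigma$ and $v^2=-1$. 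Since $u$ centralizes $L_\sigma$, $v$ centralizes $L_\rho$, and $u,v$ commute, the subalgebras $B_1$ and $B_2$ commute elementwise, so multiplication gives a well-defined $F$-algebra homomorphism $B_1\otimes_F B_2 \to A$. It is non-zero, the source is simple of $F$-dimension $2^{2k}\cdot 4 = [L:F]^2 = \dim_F A$, and therefore it is an isomorphism, yielding
\[
A_\mathbb{Q}(\chi)\;\cong\; M_{2^k}(F)\otimes_F \mathbb{H}(F) \;\cong\; M_{2^k}\!\bigl(\mathbb{H}(F)\bigr),
\]
which is the claimed Wedderburn component.

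The principal obstacle is the explicit computation of the Yamada factor set (which hinges on the centrality of $a^{2^{n-1}}$) and the verification that $B_1, B_2$ are commuting $F$-subalgebras of matching total dimension. Once these are in place, the tensor decomposition and the dimension count close the argument; everything else---the computation of $I$, of the Galois group, and of $\mathbb{Q}(\chi)$---follows directly from the arithmetic condition $v_2(r-1)=n-k$ and the commutation $bc=cb$.
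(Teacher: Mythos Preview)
Your argument is correct. The paper does not actually prove Lemma~\ref{Lemma 5.5}; it simply asserts that the result ``can be deduced readily from the proof of Theorem 13.5.3 in \cite{Jespers-2016}'' and moves on. So there is no proof in the paper to compare against in detail, and your write-up supplies exactly the kind of self-contained argument the paper omits.

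A brief comparison nonetheless: the cited result in \cite{Jespers-2016} computes Wedderburn components of metacyclic-by-$C_2$ $2$-groups via the machinery of strong Shoda pairs and the associated crossed products; your route bypasses that formalism and stays entirely inside the paper's own toolkit (Remark~\ref{remark3.3}, Theorem~\ref{Theorem 7.0.1}, Proposition~\ref{cyclic}, Lemma~\ref{Lemma 4.1.1}). The key structural step---splitting the crossed product $(L,\langle\sigma\rangle\times\langle\rho\rangle,\beta)$ as $B_1\otimes_F B_2$ with $B_1=(L_\rho/F,\sigma,1)$ and $B_2=(L_\sigma/F,\rho,-1)$---is clean and makes transparent why the matrix size is $2^k$ and the division part is $\mathbb{H}(F)$. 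This is arguably more illuminating than an appeal to an external classification.

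One small expository slip: after correctly computing that the non-trivial products are all those of the form $(b^{j_1}c)(b^{j_2}c)=a^{2^{n-1}}b^{j_1+j_2}$, you summarize this as ``$\beta(\rho,\rho)=-1$ and $\beta=1$ on all other pairs,'' which literally understates it---in fact $\beta(\sigma^{j_1}\rho,\sigma^{j_2}\rho)=-1$ for \emph{all} $j_1,j_2$. This does not affect anything downstream, since the relations you extract ($u^{2^k}=1$, $v^2=-1$, $uv=vu$) depend only on $\beta$ restricted to the cyclic subgroups $\langle\sigma\rangle$ and $\langle\rho\rangle$ and on $\beta(\sigma,\rho)=\beta(\rho,\sigma)=1$, all of which you have correctly. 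You might also remark, for completeness, that $\mathbb{H}(F)$ is genuinely a division algebra because $F=\mathbb{Q}(\zeta_{2^{n-k}}+\zeta_{2^{n-k}}^{-1})$ is totally real; this is implicit in Lemma~\ref{Lemma 4.1.1} but worth one sentence.
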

\begin{remark}
If $G = P_{2}$ (resp. $G = P_{3}$) and $F$ has characteristic $0$, then the Schur index any faithful irreducible $\overline{F}$-representation of $G$ w.r.t. $F$ equals $1$ (resp. either $1$ or $2$).
\end{remark}
Now, we can prove our main result Theorem \ref{main}.

{\it Hypothesis.} Let $G$ be a metabelian group with a prime power order maximal abelian normal subgroup $K$ cotaining the derived subgroup $G^{'}$, i.e. $G/K$ is abelian. Let exp$(K) = p^{n}$, $p$ a prime and $n$ a positive integer. Let $F$ be an arbitrary field of characteristic $0$. Suppose that $\rho$, $\rho^{'}$ are any two faithful irreducible $\overline{F}$-representations of $G$.

{\bf Proof of Theorem \ref{main} (1).}
We show that $m_{F}(\rho) = m_{F}(\rho^{'})$. Without loss of generality, we can assume that $G$ is non-abelian, and therefore $G/K\neq \langle 1 \rangle$. From Remark
\ref{remark3.3}, $\rho$ and $\rho'$ are induced from linear
$\overline{F}$-representations of $K$, say $\lambda$ and $\lambda'$
respectively. Let $\chi$, $\chi'$ be the characters of $\rho$ and
$\rho'$ respectively. From the proof of Theorem \ref{thm4.2} (see
\cite{Rahul-2021}, Theorem 3.2), $\ker\lambda$ and $\ker\lambda'$
are core-free in $G$, and then by Lemmata \ref{Lemma 2.0.1},
\ref{Lemma 2.0.2} and \ref{lemma3.1}$(ii)$, it is easy to see that
$F(\chi)=F(\chi')$ and $F(\lambda)=F(\lambda') = F(\zeta_{p^n})$. As
$m_F(\rho)=m_{F(\chi)}(\rho)$,
$m_F(\rho^{'})=m_{F(\chi^{'})}(\rho^{'})$ (see \cite{Isaacs-1976},
Corollary $(10.2)(a)$) and $F(\chi)=F(\chi')$, then without loss of
generality, we can assume that $F=F(\chi)=F(\chi')$.

Let $I:=\{ g\in G\,: \,  \mbox{there exists}~\tau(g)\in {\rm
Gal}(F(\lambda)/F)~ \mbox{such that}~ \lambda^g=\lambda^{\tau(g)}\}$
and $I^{'}:=\{ g\in G\,: \,  \mbox{there exists}~\tau(g)\in {\rm
Gal}(F({\lambda^{'}})/F)~ \mbox{such that}~
{\lambda^{'}}^g={\lambda^{'}}^{\tau(g)}\}.$ As
$F(\lambda)=F(\lambda')$,  from Theorem \ref{Theorem 7.0.1} of
Yamada recalled above it follows that
$$(*)\hspace{.8cm} I/K\cong {\rm Gal}(F(\lambda)/F) = {\rm Gal}(F(\lambda')/F)\cong I'/K.$$

If $\eta:=\oplus_{\tau} \lambda^{\tau}$, where $\tau$ runs over
${\rm Gal}(F(\lambda)/F)$, then $\eta$ is an irreducible
$F$-representation of $K$; in this case, $I$ is the inertia subgroup
of $\eta$ in $G$, and $\ker\eta=\ker\lambda$, which is core-free in
$G$. Similarly, $I'$ is the inertia subgroup of an irreducible
$F$-representation $\eta^{'}$ of $K$,  and
$\ker\eta^{'}=\ker\lambda^{'}$, which is core-free in $G$. By
Theorem \ref{theorem3.4},  we have $I=I'$. Also, from Theorem \ref{Theorem 7.0.1}, we have
$m_F(\lambda\uparrow_K^G)=m_F(\lambda\uparrow_K^I)$ and
$m_F(\lambda'\uparrow_K^G)=m_F(\lambda'\uparrow_K^I)$. Therefore,
without loss of generality, we may assume that $I = G$ and $I^{'} = G$. Again since $I = I^{'}$, without loss of generality, we can assume that $I = I^{'} = G$.
As ker$\lambda \triangleleft I$, ker$\lambda^{'} \triangleleft I^{'}$
and $I= I^{'} = G$, this implies that ker$\lambda = $ ker$\lambda^{'} = 1$. Thus $G$ reduces to a metabelian group with a maximal abelian normal subgroup $K$ of order $p^{n}$ which is cyclic. From the equation $(*)$, we have
$$(**)\hspace{.8cm} G/K \cong {\rm Gal}(F(\lambda)/F).$$

    We consider the cases $p$ odd and $p = 2$ separately.

Write $E$ for the field $F(\lambda)=F(\lambda') = F(\zeta_{p^n})$.

{\bf Case $p$ odd.}

It is easy to see that ${\rm Gal}(E/F)$ is isomorphic to a normal subgroup of Gal$(\mathbb{Q}(\zeta_{p^n})/\mathbb{Q})$. Therefore ${\rm Gal}(E/F)$ is isomorphic to a cyclic subgroup of Aut$(C_{p^n}) (\cong C_{p^{n-1}(p-1)})$. From the equation $(**)$, $G/K$ is cyclic, let $ G/K = \langle yK\rangle$, for some $y$ in $G$. From the definition of $I$, there exists an automorphism $\sigma$ in ${\rm Gal}(F(\lambda)/F)$ such that $\lambda^y=\lambda^{\sigma}.$ It is also easy to see that $\lambda'^y=\lambda'^{\sigma}$.

Let $G/K$ be of order $d$. Then $y^d$ is a central element in $G$, and so $\langle {y^d}\rangle \subseteq Z(G)$.
Notice that $\lambda$ and $\lambda'$ are  faithful on $\langle y^d\rangle$. If $o(y^d) = p^s$, then without loss of generality, we can assume that
\begin{equation}\label{equation4.1}
\lambda(y^d)=\zeta_{p^s} \hskip5mm \mbox{ and } \hskip5mm \lambda'(y^d)=\zeta_{p^s}^j \hskip5mm \mbox{ for some } (j, p^s)=1.
\end{equation}
Now $e=(\chi(1)/|G|)\sum_{g\in G} \chi(g^{-1})g$ and
$e'=(\chi'(1)/|G|)\sum_{g\in G} \chi'(g^{-1})g $ are the primitive
central idempotents in $F[G]$ corresponding to $m_F(\rho)\chi$ and
$m_F(\rho')\chi'$ respectively. Hence $F[G]e$ and $F[G]e'$ are
central simple algebras over $F$, and from Theorem \ref{Theorem 7.0.1}, they are isomorphic to crossed product algebras
$(E, {\rm Gal}(E/F),\beta)$ and $(E, {\rm Gal}(E/F),\beta')$ (see Section \ref{Section 3})
respectively, where (using the expression \ref{equation4.1}), the
factor sets $\beta$ and $\beta'$ are given by the expressions
\begin{align*}\tag{***}\label{equation5.3}
\beta(\sigma^a,\sigma^b)= 1, & &\beta'(\sigma^a,\sigma^b)=1  \hskip5mm\mbox{ if } a+b<d,\\
\mbox{ and }\hskip3mm \beta(\sigma^a,\sigma^b)=\lambda(y^d)=\zeta_{p^s},  & &
\beta'(\sigma^a,\sigma^b)=\lambda'(y^d)=\zeta_{p^s}^{j}  \hskip5mm \mbox{ if } a+b\ge d.
\end{align*}
Now $m_F(\rho)$ is the index of $F$-algebra $F[G]e$ $\cong$ $ (E, {\rm Gal}(E/F),\beta)$ (see Section \ref{Section 2}).
Similarly, $m_F(\rho^{'})$ is the index of $F$-algebra $F[G]e^{'}$ $\cong$ $ (E, {\rm Gal}(E/F),\beta^{'})$ (see Section \ref{Section 2}).
The crossed product algebra $(E, {\rm Gal}(E/F),\beta)$ is isomorphic to the cyclic algebra $(E/F, \sigma, \zeta_{p^s})$ (see Section \ref{Section 4}),
and similarly, $(E, {\rm Gal}(E/F),\beta^{'})$ is isomorphic to the cyclic algebra $(E/F, \sigma, \zeta_{p^s}^{j})$ (see Section \ref{Section 4}).
Now this discussion now splits into two subcases.\\\\
{\it Subcase I. The order of $y^{d}$ = 1:} In this case, it is clear that $(E, {\rm Gal}(E/F),\beta)$ and $(E, {\rm Gal}(E/F),\beta')$ have the same index $1$.
Hence, $m_F(\rho)=m_F(\rho') = 1$.\\\\
{\it Subcase II. The order of $y^{d} = p^s, s \geq 1$:} In this case, notice that $\zeta_{p^s} \in F$. If $E/F$ is a cyclic Galois extension of degree $p^l$, say, then the cyclic algebra $(E/F, \sigma, \zeta_{p^s})$ is isomorphic to the matrix algebra $M_{p^l}(F)$ (see Subsection \ref{subsection 3.4} $(i)$). Similarly, the cyclic algebra $(E/F, \sigma^{'}, \zeta^{j}_{p^s})$ is isomorphic to the matrix algebra $M_{p^l}(F)$. Thus, $(E, {\rm Gal}(E/F),\beta)$ and $(E, {\rm Gal}(E/F),\beta')$ have the same index $1$. Hence, $m_F(\rho)=m_F(\rho') = 1$.

{\bf Case $p = 2$.} This discussion now splits into two subcases.

    {\it Subcase I. $F$ contains $\sqrt{-1}$.} As $p = 2$ and $F$ contains $\sqrt{-1}$, then ${\rm Gal}(E/F) = {\rm Gal}(F(\zeta_{2^{n}})/F)$ is a cyclic $2$-group, say, of order $2^l$.

Similarly to the case of odd $p$, we may proceed as follows. From the equation $(**)$, ${\rm Gal}(E/F)\cong G/K$, let $ G/K = \langle yK\rangle$, for some $y$ in $G$. From the definition of $I$, there exists an automorphism $\sigma$ in ${\rm Gal}(E/F)$ such that $\lambda^y=\lambda^{\sigma}$. It is easy to see that $\lambda'^y=\lambda'^{\sigma}$.\\
As $G/K$ is of order $2^l$, then $y^{2^l}$ is a central element in $G$, and so $\langle {y^{2^l}}\rangle \subseteq Z(G)$.
Notice that $\lambda$ and $\lambda'$ are  faithful on $\langle y^{2^l}\rangle$. If $o(y^{2^l}) = 2^s$, then without loss of generality, we can assume that
\begin{equation}\label{equation4.2}
\lambda(y^{2^l})=\zeta_{2^s} \hskip5mm \mbox{ and } \hskip5mm \lambda'(y^{2^l})=\zeta_{2^s}^j \hskip5mm \mbox{ for some } (j, 2^s)=1.
\end{equation}
Now $e=(\chi(1)/|G|)\sum_{g\in G} \chi(g^{-1})g$ and $e'=(\chi'(1)/|G|)\sum_{g\in G} \chi'(g^{-1})g $ are the primitive central idempotents in $F[G]$ corresponding to $m_F(\rho)\chi$ and $m_F(\rho')\chi'$ respectively. Hence $F[G]e$ and $F[G]e'$ are central simple algebras over $F$, and by Theorem \ref{Theorem 7.0.1}, they are isomorphic to crossed products $(E, {\rm Gal}(E/F),\beta)$ and
$(E, {\rm Gal}(E/F),\beta')$ respectively (see Section \ref{Section 3}), where (using equation \ref{equation4.2}), the factor sets $\beta$ and $\beta'$ are given by
\begin{align*}\tag{***}\label{equation5.3}
\beta(\sigma^a,\sigma^b)= 1, & &\beta'(\sigma^a,\sigma^b)=1  \hskip5mm\mbox{ if } a+b<2^l,\\
\mbox{ and }\hskip3mm \beta(\sigma^a,\sigma^b)=\lambda(y^d)=\zeta_{2^s},  & &
\beta'(\sigma^a,\sigma^b)=\lambda'(y^d)=\zeta_{2^s}^{j}  \hskip5mm \mbox{ if } a+b\ge 2^l.
\end{align*}
As Gal$(E/F)$ is a cyclic extension, then the crossed  product algebra $(E, {\rm Gal}(E/F),\beta)$ is  isomorphic to the cyclic algebra $(E/F, \sigma, \zeta_{2^s})$ (see Section \ref{Section 4}). Similarly, the crossed  product algebra $(E, {\rm Gal}(E/F),\beta^{'})$ is  isomorphic to the cyclic algebra $(E/F, \sigma, \zeta^{j}_{2^s})$ (see Section \ref{Section 4}). From the Subsection \ref{subsection 3.4} (ii), if $s \geq 2$ then the cyclic algebras $(E/F, \sigma, \zeta_{2^s})$ and $(E/F, \sigma^{'}, \zeta^{j}_{2^s})$ are isomorphic to the matrix algebra $M_{2^l}(F)$. For $s = 0, 1$, it is clear that the cyclic algebras $(E/F, \sigma, \zeta_{2^s})$ and $(E/F, \sigma^{'}, \zeta^{j}_{2^s})$ are isomorphic to the matrix algebra $M_{2^l}(F)$. Thus, $(E, {\rm Gal}(E/F),\beta)$ and $(E, {\rm Gal}(E/F),\beta')$ are of the same index $1$. Hence, $m_F(\rho)=m_F(\rho') = 1$.

    {\it Subcase II. $F$ does not contain $\sqrt{-1}$.} As $p = 2$ and $F$ does not contain $\sqrt{-1}$, then ${\rm Gal}(E/F) = {\rm Gal}(F(\zeta_{2^{n}})/F)$ is  either cyclic group $C_{2}$ or non-cyclic abelian $2$-group.\\
If $G/K \cong {\rm Gal}(E/F)$ is cyclic group $C_{2}$, then $G$ is isomorphic to $Q_{2^{n}}, n \geq 3$, $D_{2^{n}}, n \geq 2$ or $SD_{2^{n}}, n \geq 4$ (see Section \ref{Section 4}). In all these three cases, from Lemma \ref{Lemma 4.1.1}, Lemma \ref{Lemma 4.1.2}, Lemma \ref{Lemma 4.1.3} and Remark \ref{Remark 4.1.4}, one can see that $m_F(\rho) = m_F(\rho')$, and the possibilities are $1$ or $2$.

Now consider ${\rm Gal}(E/F) = {\rm Gal}(F(\zeta_{2^{n}})/F)$ is a non-cyclic abelian $2$-group, in fact,
isomorphic to $\mathbb{Z}_{2} \times \mathbb{Z}_{2^{k}}$, for some
positive integer $k$. Therefore,  $G/K$ is isomorphic to
$\mathbb{Z}_{2} \times \mathbb{Z}_{2^{k}}$. So $G$ turns out to be a
$2$-group which contains a maximal abelian normal
subgroup which is cyclic and $G/K$ is isomorphic to $\mathbb{Z}_{2} \times
\mathbb{Z}_{2^{k}}$. From Lemma \ref{Lemm 5.2}, $G$ is isomorphic to
one of the following two types of groups:\\
$P_{2} = \langle{a, b, c ~|~ a^{2^n} = b^{2^k} = c^2 = 1, bc = cb, b^{-1}ab = a^{r}, c^{-1}ac = a^{-1}}\rangle, with ~r \equiv 1~ {\rm mod}~ 4,$\\
$P_{3} = \langle{a, b, c ~|~ a^{2^n} = b^{2^k} = 1, c^2 = a^{2^{n-1}}, bc = cb, b^{-1}ab = a^{r}, c^{-1}ac = a^{-1}}\rangle, with~ r~ \equiv~ 1~{\rm mod}~ 4.$\\
As $P_{2} \cong C_{2^n} \rtimes (C_{2^k} \times C_{2})$, then the
Schur index of any faithful irreducible
$\overline{F}$-representation of $P_{2}$ w.r.t. $F$ is equal to $1$
(see \cite{Isaacs-1976}, Lemma 10.8). For $P_{3}$, from the Lemma \ref{Lemma 5.5}, the Wedderburn component corresponding to any faithful irreducible $\mathbb{Q}$-representation of $P_{3}$ is $M_{2^{k}}(\mathbb{H}(\mathbb{Q}(\zeta_{2^{n-k}} + \zeta^{-1}_{2^{n-k}})))$, where $\mathbb{H}(\mathbb{Q}(\zeta_{2^{n-k}} + \zeta^{-1}_{2^{n-k}}))$ is the quaternion algebra over $\mathbb{Q}(\zeta_{2^{n-k}} + \zeta^{-1}_{2^{n-k}})$; moreover this is a division algebra. So the Schur index of any faithful irreducible $\overline{F}$-representation of $P_{3}$ w.r.t $\mathbb{Q}$ is equal to $2$. As $P_{3}$ is a metabelian $2$-group, the character fields of all the faithful irreducible $\overline{F}$-representation of $P_{3}$ over $F$ are the same. Hence from the Remark \ref{Remark 4.0.1}, the Schur indices of all the faithful irreducible $\overline{F}$-representation of $P_{3}$ w.r.t. $F$ are the same, and is equal to always $1$ or $2$. Thus in this case also $m_F(\rho) = m_F(\rho')$. This completes the proof of statement $(1)$.

{\bf Proof of Theorem \ref{main} (2).} Now we prove the statement $(2)$. By
Theorem $(70.15)$ in \cite{Curtis-1962}, let $\rho\otimes_F
\overline{F}\cong m_F(\rho_1)(\rho_1\oplus \cdots \oplus
\rho_{\delta})$ and $\rho'\otimes_F \overline{F}\cong
m_F(\rho'_1)(\rho'_1\oplus \cdots \oplus \rho'_{\delta'})$. Since
$\rho_1$ and $\rho_1'$ are faithful irreducible
$\overline{F}$-representations, they are induced from one
dimensional $\overline{F}$-representations of $K$ (see Remark
\ref{remark3.3}), hence $\deg(\rho_1)=\deg(\rho_1')=[G:K]$. Let
$\chi_1$ and $\chi_1'$ be the characters of $\rho_1$ and $\rho_1'$
respectively. From the proof of $(1)$, we have
$F(\chi_1)=F(\chi_1')$. Hence
$\delta=[F(\chi_1):F]=[F(\chi_1'):F]=\delta'$. From part $(1)$, we
have $m_F(\rho_1)=m_F(\rho_1')$.  It follows that
$\deg(\rho)=\deg(\rho')$. This proves the statement $(2)$.

{\bf Proof of Theorem \ref{main} (3).}
Proof of the statement $(3)$ follows from the proof of statement $(1)$. More precisely, we have seen that in the proof of statement $(1)$, for $p$ odd, $m_F(\rho) = m_F(\rho') = 1$ and for $p = 2$, in all the subcases, $m_F(\rho) = m_F(\rho')$, and which is equal to $1$ or $2$. This proves the statement $(3)$.

{\bf Proof of Theorem \ref{main} (4).} Let $\rho$ be a faithful
irreducible $F$-representation of $G$ and $\eta$ be an irreducible
constituent of $\rho\downarrow_K^G$. By Lemma \ref{Lemma 2.0.1},
$\eta$ factors through faithful irreducible $F$-representation of
cyclic quotient $K/\textrm{ker}(\eta)$. As $\rho$ is faithful then
ker$(\eta)$ is core-free in $G$. Applying Theorem \ref{thm4.2}, and
noting that $m=1$ or $2$ (which is a consequence of the statement
$(3)$), we deduce that $\eta$ or $2(\eta)$ extends to an irreducible
$F$-representation of $I$ (the inertia subgroup of $\eta$ in $G$),
say, $\theta$ and $\rho \cong \theta\uparrow_I^G$. Therefore, the
degree of any faithful irreducible representation of $G$ over $F$ is
equal to $[G:I]\textrm{deg}(\eta)$ or $2[G:I]\textrm{deg}(\eta)$.
From the Lemma \ref{lemma3.1}, the degrees of irreducible
representations of $K$ over $F$ with core-free kernel in $G$ are the
same, say, $d$. From the Lemma \ref{Lemma 2.0.2}, $d$ is equal to
the common degree of irreducible factors of $p^{n}$-th cyclotomic
polynomial in $F[X]$. This completes the proof of the theorem.
\hfill$\square$

\begin{cor}
Let $G$ be a metabelian $p$-group and $F$ a field of characteristic $0$. Then all the faithful irreducible $F$-representations of $G$ (if they exist) have the same degree.
\end{cor}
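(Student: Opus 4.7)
The plan is to deduce this as a direct consequence of Theorem \ref{main}(2). The only thing to verify is that a finite metabelian $p$-group $G$ satisfies the hypotheses of the main theorem, namely that $G$ admits a maximal abelian normal subgroup $K$ containing $G^{'}$ which has prime power order.

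First I would produce such a subgroup $K$. Since $G$ is metabelian, $G^{'}$ is abelian and normal in $G$, so the family of abelian normal subgroups of $G$ containing $G^{'}$ is non-empty. Using finiteness of $G$, choose $K$ maximal in this family with respect to inclusion. I would then observe that $K$ is in fact a maximal abelian normal subgroup of $G$ in the unrestricted sense: any abelian normal subgroup $L$ with $L \supseteq K$ automatically satisfies $L \supseteq G^{'}$, so maximality of $K$ among abelian normal subgroups containing $G^{'}$ forces $L = K$. Since $G$ is a $p$-group, $K$ is a $p$-group, hence of prime power order, so the hypotheses of Theorem \ref{main} are met with this $K$.

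The conclusion is then immediate: by Theorem \ref{main}(2), all faithful irreducible $F$-representations of $G$ (provided any exist) have the same degree. There is no serious obstacle here, as the corollary is essentially a packaging of Theorem \ref{main} under the stronger assumption that $G$ itself is a $p$-group; the only non-trivial point is the existence argument for $K$, which is routine.
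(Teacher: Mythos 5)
Your proof is correct and matches the paper's intent: the corollary is stated there without proof as an immediate consequence of Theorem \ref{main}(2), and your only added content --- choosing $K$ maximal among abelian normal subgroups containing $G'$, noting it is then maximal among all abelian normal subgroups and is automatically of prime power order since $G$ is a $p$-group --- is exactly the routine verification the paper leaves implicit.
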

\section{The Wedderburn Components of Faithful Irreducible Representations} \label{Section 7}
In this section, for a metabelian group $G$ with a prime power order maximal abelian normal subgroup containing $G^{'}$ and $F$ a field of characteristic $0$, we explicitly determine the Wedderburn components in the group algebra $F[G]$ corresponding to the faithful irreducible $\overline{F}$-representations.

In order to determine the Wedderburn components, we apply Theorem \ref{Theorem 7.0.1} of Yamada and the following two theorems.
\begin{thm}\label{Theorem 7.0.2}
Let $p$ be an odd prime and $F$ a field of characteristic $0$. Let $G$ be a metabelian group with a cyclic maximal abelian normal subgroup $K$ of order $p^n$, $n$ a positive integer, cotaining $G^{'}$. Let $\rho$ be a faithful irreducible $\overline{F}$-representation of $G$ with the character $\chi$. Let $\lambda$ be a  (faithful) irreducible $\overline{F}$-representation of the restriction $\rho \downarrow^{G}_{K}$. Let $I:=\{ g\in G : \mbox{there exists}~\tau(g)\in {\rm Gal}(F(\lambda)/F)~ \mbox{such that}~ \lambda^g=\lambda^{\tau(g)}\}.$ The following properties hold.
\begin{enumerate}
\item[(1)] $\rho = \lambda \uparrow^{G}_{K}$, $F(\lambda) = F(\zeta_{p^{n}})$ and $I/K \cong $ Gal$(F(\lambda)/F(\chi))$ is a cyclic group.
\item[(2)] Suppose that $I = G$. Let $G/K = \langle yK \rangle \cong C_{d}$, for some $y \in G$. Then there exists an automorphism $\sigma$ of $F(\lambda)$ such that $\lambda^y = \lambda^{\sigma}$. Moreover, the Wedderburn component corresponding to $\rho$ in $F[G]$ is isomorphic to $M_{d}(F(\chi)),$ and furthermore $F(\chi)$ is the subfield of $F(\lambda)$ fixed by $\sigma$.
\end{enumerate}
\end{thm}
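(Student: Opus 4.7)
The plan is to reduce the theorem to the crossed-product machinery of Yamada (Theorem \ref{Theorem 7.0.1}) combined with the cyclic-algebra computation performed in Subsection \ref{subsection 3.4}(i). The idea is that once everything is identified as a cyclic algebra over $F(\chi)$, the statement collapses to a norm calculation that has already been done.

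For part (1), I would first apply Remark \ref{remark3.3} to $\rho$: since $F$ has characteristic $0$ and $\rho$ is a faithful irreducible $\overline{F}$-representation of a metabelian group, $\rho\cong\lambda\uparrow_K^G$ for a linear $\overline{F}$-character $\lambda$ of $K$. To see that $\lambda$ is faithful, note that $\ker\lambda$ is core-free in $G$ (from the proof of Theorem \ref{thm4.2}), so $K/\ker\lambda$ is cyclic and satisfies the hypotheses of Lemma \ref{lemma3.1}; part (ii) of that lemma gives $|K/\ker\lambda|=\textrm{exp}(K)=p^n=|K|$, whence $\ker\lambda=1$. Consequently $F(\lambda)=F(\zeta_{p^n})$. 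The isomorphism $I/K\cong\textrm{Gal}(F(\lambda)/F(\chi))$ is exactly Theorem \ref{Theorem 7.0.1}(2), and its cyclicity reduces to the cyclicity of $\textrm{Gal}(F(\zeta_{p^n})/F)$, which in turn follows from the fact that for odd primes $p$ the group $\textrm{Gal}(\mathbb{Q}(\zeta_{p^n})/\mathbb{Q})\cong(\mathbb{Z}/p^n\mathbb{Z})^*$ is cyclic.

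For part (2), assume $I=G$. The automorphism $\sigma$ exists by the very definition of $I$ applied to $y$. By Theorem \ref{Theorem 7.0.1}(4), the Wedderburn component $A_F(\chi)$ is the crossed product $(F(\lambda),\textrm{Gal}(F(\lambda)/F(\chi)),\beta)$; since $\textrm{Gal}(F(\lambda)/F(\chi))=\langle\sigma\rangle$ is cyclic of order $d$, choosing coset representatives $y^0,y^1,\dots,y^{d-1}$ of $K$ in $G$ yields the cyclic algebra
\[
A_F(\chi)\cong(F(\lambda)/F(\chi),\sigma,\lambda(y^d)).
\]
Because $G/K=\langle yK\rangle$ is cyclic of order $d$, the element $y^d$ lies in $K$ and commutes with $y$ and with $K$, hence $y^d\in Z(G)$. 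Thus $\lambda(y^d)=\zeta_{p^s}$ is a $p$-power root of unity, and the identity $\chi(y^d)=d\,\lambda(y^d)$ (from Frobenius' formula, with $y^d$ central) shows $\zeta_{p^s}\in F(\chi)$.

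The remaining task is to identify this cyclic algebra with $M_d(F(\chi))$. The trivial cases are $d=1$ (so $G=K$ and $A_F(\chi)=F(\chi)$) and $s=0$ (trivial factor set). Otherwise $1\leq s<n$: indeed, $s=n$ would force $y^d$ to generate $K$, hence $\langle y\rangle=G$, making $G$ cyclic and therefore abelian, so $G=K$ and $d=1$, a contradiction. With $\zeta_p\in F(\chi)$, the extension $F(\lambda)/F(\chi)$ has $p$-power degree equal to $d$, and Subsection \ref{subsection 3.4}(i) applied over the base field $F(\chi)$ shows $\zeta_{p^s}\in N_{F(\lambda)/F(\chi)}(F(\lambda)^*)$. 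By Proposition \ref{cyclic}, the cyclic algebra splits as $M_d(F(\chi))$. Finally, $F(\chi)$ is the fixed field of $\sigma$ because $\langle\sigma\rangle=\textrm{Gal}(F(\lambda)/F(\chi))$ under the Yamada identification. The only real obstacle is the norm computation $\zeta_{p^s}\in N_{F(\lambda)/F(\chi)}(F(\lambda)^*)$; this is precisely the binomial-coefficient divisibility argument already carried out in Subsection \ref{subsection 3.4}(i), so here it only needs to be invoked after verifying that its hypotheses ($\zeta_p$ in the base field, extension cyclic of $p$-power degree, $s<n$) are met.
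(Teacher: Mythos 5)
Your proposal is correct and follows essentially the same route as the paper: Remark \ref{remark3.3}/Clifford theory together with Lemma \ref{lemma3.1} for part (1), and Yamada's Theorem \ref{Theorem 7.0.1} combined with the cyclic-algebra norm computation of Subsection \ref{subsection 3.4}(i) for part (2). The only differences are presentational: the paper obtains the splitting $M_d(F(\chi))$ by citing the Schur-index-one conclusion from the proof of Theorem \ref{main} (which rests on the same norm argument you inline, after your observation that $\lambda(y^d)=\chi(y^d)/d\in F(\chi)$), and it verifies that $F(\chi)$ is the fixed field of $\sigma$ by an explicit character-value calculation rather than by quoting Yamada's part (2) as you do.
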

\begin{proof}
As $\rho$ is a faithful irreducible $\overline{F}$-representation of $G$, and $\lambda$ an irreducible constituent of the restriction
$\rho\downarrow^{G}_{K}$, this implies that $\lambda$ is a
faithful degree one $\overline{F}$-representation of $K$. It is easy to see that the inertia subgroup of $\lambda$ in $G$ is $K$. Therefore by Clifford's theorem (see \cite{Curtis-1962}, Theorem $(49.7)$), $\rho \cong \lambda \uparrow^{G}_{K}$. Since $\lambda$ is a faithful irreducible $\overline{F}$-representation of $K$, $F(\lambda) = F(\zeta_{p^n})$. Once again, we use Theorem \ref{Theorem 7.0.1} of Yamada to deduce $I/K \cong $
Gal$(F(\lambda)/F(\chi)).$
As $p$ is odd, then Gal$(F(\zeta_{p^n})/F)$ is a cyclic group. Therefore
Gal$(F(\zeta_{p^n})/F(\chi))$ is also cyclic. This
completes the proof of $(1)$.

By assumption $I = G$. From
the part (1), we have $I/K \cong $ Gal$(F(\lambda)/F(\chi)),$ and so
$G/K = \langle yK \rangle \cong$ Gal$(F(\lambda)/F(\chi))$ $ = $
Gal$(F(\zeta_{p^n})/F(\chi))$ is a cyclic group of order $d$. Since for any $g \in G$, $\lambda^{g} =
\lambda^{\tau(g)}$, for some $\tau(g) \in$ Gal$(F(\lambda)/F)$, there exists a
an $F$-automorphism $\sigma$ of $F(\lambda)$ such that
$\lambda^y = \lambda^{\sigma}$. From the proof of the main theorem
Theorem \ref{main}, the Schur index of $\rho$ w.r.t. $F$ is equal to
$1$. Then the Wedderburn component corresponding to $\rho$ in $F[G]$
is $M_{d}(F(\chi))$. Now we show that $F(\chi)$ is the subfield of $F(\lambda)$ fixed by $\sigma$.

Note that $G = \langle{K, y}\rangle$, where $y^d \in K$. For each
$i$, $0 \leq i \leq d-1$, let $\lambda_{i}$ be the representation
conjugate to $\lambda$, and is defined by $\lambda_{i}(x) =
\lambda(y^{-i}xy^{i}),~ \textrm{for~all}~ x \in K.$ Note that the
representations $\{\lambda_{i} : 0 \leq i \leq d-1\}$ are
inequivalent, and $\rho = \lambda_{i} \uparrow^{G}_{K}$. From the
Frobenius reciprocity theorem (see \cite{Curtis-1962}, Theorem
$(38.8)$), these are the only irreducible representations of $K$
which induce $\rho$. Since $K$ is a normal subgroup of $G$, $\chi$
vanishes on $G - K$. Again since $\rho\downarrow^{G}_{K} =
\oplus^{d}_{i = 1} \lambda_{i}$, we have
$$\chi(x) = \sum^{d - 1}_{i = 0} \lambda_{i}(x) = \sum^{d - 1}_{i = 0} \lambda(y^{-i}xy^{i}), \textrm{for~all}~ x \in K.$$

Since the map $\lambda \mapsto \lambda^y$ is induced by the automorphism $\sigma$ of $F(\lambda)$, we have $\lambda(y^{-1}xy) = (\lambda(x))^{\sigma},~ \textrm{for~all}~ x \in K.$ Then $\textrm{for~all}~ x \in K$,
$$\chi(x) = \sum^{d-1}_{i = 0}{(\lambda(x))^{\sigma^i}}.$$
Therefore, for all $x \in K$, $\chi(x)$ belong to the field, which is fixed by $\sigma$. Since $\lambda(y^{-d}xy^{d}) = \lambda(x),$ $\textrm{for~all}~ x \in K$, we have $o(\sigma) = d$. As $F(\lambda) / F$ is cyclic, $[F(\lambda) : F(\chi)] = d$ and $o(\sigma) = d$,  $F(\chi)$ is the subfield of $F(\lambda)$ fixed by $\sigma$. This proves the theorem.
\end{proof}

\begin{thm} \label{Theorem 7.0.3}
Let $F$ be a field of characteristic $0$ containing $\sqrt{-1}$. Let $G$ be a metabelian group with a cyclic maximal abelian normal subgroup $K$ of order $2^n$, $n$ a positive integer, containing $G^{'}$. Let $\rho$ be a faithful irreducible $\overline{F}$-representation of $G$ with the character $\chi$. Let $\lambda$ be a (faithful) irreducible $\overline{F}$-representation of the restriction $\rho \downarrow^{G}_{K}$. Let $I:=\{ g\in G : \mbox{there exists}~\tau(g)\in {\rm Gal}(F(\lambda)/F)~ \mbox{such that}~ \lambda^g=\lambda^{\tau(g)}\}.$ The following properties hold.
\begin{enumerate}
\item[(1)] $\rho = \lambda \uparrow^{G}_{K}$, $F(\lambda) = F(\zeta_{2^{n}})$ and $I/K \cong $ Gal$(F(\lambda)/F(\chi))$ is a cyclic $2$-group.
\item[(2)] Suppose that $I = G$. Let $G/K = \langle yK \rangle \cong C_{2^l}$, for some $y \in G$. Then there exists an automorphism $\sigma$ of $F(\lambda)$ such that $\lambda^y = \lambda^{\sigma}$. Moreover, the Wedderburn component corresponding to $\rho$ in $F[G]$ is isomorphic to $M_{2^{l}}(F(\chi)),$ $F(\chi)$ is the subfield of $F(\lambda)$ fixed by $\sigma$. Furthermore, $F(\chi)$ is of the form $F(\zeta_{2^m})$, where $m \geq 2$.
\end{enumerate}
\end{thm}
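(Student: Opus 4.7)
The plan is to parallel the proof of Theorem \ref{Theorem 7.0.2}, with the hypothesis $\sqrt{-1}\in F$ playing the role that ``$p$ odd'' played there: it guarantees that $\textrm{Gal}(F(\zeta_{2^n})/F)$ is cyclic, so the odd-prime argument carries over almost verbatim. For part (1), since $K$ is abelian, $\lambda$ is linear; being a constituent of $\rho\downarrow_K^G$ with $\rho$ faithful and $K$ a maximal abelian normal subgroup of $G$, $\lambda$ has core-free kernel in $G$ and is therefore faithful on $K$ (cf.\ the proof of Theorem \ref{thm4.2}). The inertia subgroup of $\lambda$ in $G$ must equal $K$ (any strictly larger such subgroup would contradict maximality of $K$, via Lemma \ref{lemma3.1}(i)), so Clifford's theorem forces $\rho\cong\lambda\uparrow_K^G$, and consequently $F(\lambda)=F(\zeta_{2^n})$. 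Theorem \ref{Theorem 7.0.1}(2) then identifies $I/K$ with $\textrm{Gal}(F(\lambda)/F(\chi))$, a subgroup of the cyclic $2$-group $\textrm{Gal}(F(\zeta_{2^n})/F)$; this is itself a cyclic $2$-group.

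For part (2), under $I=G$ the cyclic quotient $G/K=\langle yK\rangle$ has order $2^l$, and the definition of $I$ produces $\sigma\in\textrm{Gal}(F(\lambda)/F)$ with $\lambda^y=\lambda^\sigma$. By Theorem \ref{Theorem 7.0.1}(4), $A_F(\chi)$ is the crossed product $(F(\lambda),\textrm{Gal}(F(\lambda)/F(\chi)),\beta)$, a central simple $F(\chi)$-algebra of dimension $(2^l)^2$. Case $p=2$ Subcase I of the proof of Theorem \ref{main} shows that $m_F(\rho)=1$ in precisely this situation, so the crossed product splits: $A_F(\chi)\cong M_{2^l}(F(\chi))$. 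To identify $F(\chi)$ with the fixed field of $\sigma$, I would transcribe the argument from Theorem \ref{Theorem 7.0.2}: normality of $K$ makes $\chi$ vanish off $K$, and on $K$,
\[\chi(x)=\sum_{i=0}^{2^l-1}\lambda(y^{-i}xy^i)=\sum_{i=0}^{2^l-1}(\lambda(x))^{\sigma^i}\in F(\lambda)^{\langle\sigma\rangle},\]
so $F(\chi)\subseteq F(\lambda)^{\langle\sigma\rangle}$; a degree count using $o(\sigma)=2^l$ (since $yK$ generates $G/K\cong\langle\sigma\rangle$) and $[F(\lambda):F(\chi)]=2^l$ forces equality.

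The genuinely new ingredient is the final claim that $F(\chi)=F(\zeta_{2^m})$ for some $m\geq 2$. Let $m_0\geq 2$ be maximal with $\zeta_{2^{m_0}}\in F$; the chain
\[F=F(\zeta_{2^{m_0}})\subsetneq F(\zeta_{2^{m_0+1}})\subsetneq\cdots\subsetneq F(\zeta_{2^n})=F(\lambda)\]
advances by quadratic steps and therefore has length $n-m_0=[F(\lambda):F]$. By cyclicity of $\textrm{Gal}(F(\lambda)/F)$, there is a unique intermediate field of each admissible degree, so the entries of this chain exhaust all intermediate fields of $F(\lambda)/F$. Hence $F(\chi)$, being such an intermediate field, equals $F(\zeta_{2^m})$ for some $m\in\{m_0,\dots,n\}$, in particular $m\geq 2$. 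The principal obstacle in this plan is conceptually the Schur-index computation in the $2$-adic case (where the absence of $\sqrt{-1}$ would introduce a genuine quaternion obstruction), but this has already been handled in the proof of Theorem \ref{main}; here the remaining work is to assemble Yamada's theorem with the field-theoretic observations about the cyclotomic tower.
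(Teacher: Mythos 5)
Your proposal is correct and follows essentially the same route as the paper, whose own ``proof'' of this theorem is just the remark that it is similar to Theorem \ref{Theorem 7.0.2} and is skipped; your observation that $\sqrt{-1}\in F$ forces ${\rm Gal}(F(\zeta_{2^n})/F)$ to be cyclic is precisely what makes the odd-prime argument transfer. You also supply a proof of the final claim $F(\chi)=F(\zeta_{2^m})$, $m\geq 2$, which the paper does not address at all; it is sound, apart from the small slip that the tower has $n-m_0$ quadratic steps so $[F(\lambda):F]=2^{\,n-m_0}$ (not $n-m_0$), and the strictness of the inclusions $F(\zeta_{2^k})\subsetneq F(\zeta_{2^{k+1}})$ deserves a word of justification (e.g.\ write the generator of ${\rm Gal}(F(\zeta_{2^n})/F)$ as $\zeta\mapsto\zeta^{a}$ with $a=1+2^{c}u$, $u$ odd, and check that the fixed field of $\langle\sigma^{2^j}\rangle$ is exactly $F(\zeta_{2^{c+j}})$, which also shows $c=m_0$).
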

\begin{proof}
The proof is similar to the proof of Theorem \ref{Theorem 7.0.2}, and we skip it.
\end{proof}
\begin{remark}
Let $G$ be a metabelian group with a prime power order maximal abelian normal subgroup $K$ containing $G^{'}$ and $F$ a field of characteristic $0$. Let exp$(K) = p^n$, for some positive integer $n$. Let $\rho$ be a faithful irreducible $\overline{F}$-representation of $G$ with the character $\chi$. Let $\lambda$ be an irreducible constituent of $\rho \downarrow^{G}_{K}$.
Let $I:=\{ g\in G\,: \,  \mbox{there exists}~\tau(g)\in {\rm Gal}(F(\lambda)/F)~ \mbox{such that}~ \lambda^g=\lambda^{\tau(g)}\}$.
Let $A_{F}(\chi)$ be the Wedderburn component of $F[G]$ corresponding to $\chi$.
\begin{itemize}
\item[(1)] If $p$ is odd, then from the proof of Theorem \ref{main} ($p$ odd case), Theorem \ref{Theorem 7.0.1} of Yamada and by using the Theorem \ref{Theorem 7.0.2}, we can explicitly determine $A_{F}(\chi)$.

\item[(2)] If $p = 2$ and $F$ contains $\sqrt{-1}$, then from the proof of Theorem \ref{main} ($p = 2$ case), Theorem \ref{Theorem 7.0.1} of Yamada and by using the Theorem \ref{Theorem 7.0.3}, $A_{F}(\chi)$ can be determined.

\item[(3)] If $p = 2$, $\sqrt{-1} \notin F$, and $I/K \cong C_{2}$, then $G$ contains a section isomorphic to a generalized quaternion group $Q_{2^n}, n \geq 3$ or dihedral group $D_{2^{n}}, n \geq 2$ or semidihedral group $SD_{2^{n}}, n \geq 4$. In this case, once again by using Theorem \ref{Theorem 7.0.1}, Lemmata \ref{Lemma 4.1.1}, \ref{Lemma 4.1.2}, \ref{Lemma 4.1.3} and Remark \ref{Remark 4.1.4}, the Wedderburn component $A_{F}(\chi)$ can be obtained.

\item[(4)] If $p = 2$, $\sqrt{-1} \notin F$, and $I/K \cong C_{2} \times C_{2^k}$, then $G$ contains a section isomorphic to $P_2$ or $P_{3}$ (see Lemma \ref{Lemm 5.2}). In this case, by Theorem \ref{Theorem 7.0.1}, Lemmata \ref{Lemma 6.0.2}, \ref{Lemma 5.5} and from the Remark \ref{Remark 4.0.1}, we can explicitly determine $A_{F}(\chi)$.
\end{itemize}
\end{remark}
\section{An Example}\label{Section 8}
In this section, we give an example which illustrates the Theorem
\ref{main} (see also \cite{Pradhan-2021}). We shall use the notation of
the previous two sections.

Let $G = C_{7}\rtimes C_{3} = \langle{x, y ~|~ x^{7}  = y^{3} = 1, y^{-1}xy = x^2}\rangle$, a metacyclic group; thus a metabelian. Note that $G^{'} = \langle x \rangle \cong C_{7}$. We take $F$ to be $\mathbb{Q}$.
Let $K = \langle{x \,|\, x^{7}  = 1}\rangle$, maximal abelian normal 7-subgroup of $G$ cotaining $G^{'}$. $G$ has two degree three faithful irreducible complex representations, and they are given by
$$[\rho(x)] = \begin{bmatrix}
\zeta & 0  & 0 \\
0 & \zeta^{2} & 0 \\
0 & 0 & \zeta^{4}\\
\end{bmatrix},
[\rho(y)] = \begin{bmatrix}
0 & 0  & 1\\
1 & 0 & 0\\
0 & 1 & 0\\
\end{bmatrix},$$
and
$$[\rho^{'}(x)] = \begin{bmatrix}
\zeta^3 & 0  & 0 \\
0 & \zeta^{6} & 0 \\
0 & 0 & \zeta^{5}\\
\end{bmatrix},
[\rho^{'}(y)] = \begin{bmatrix}
0 & 0  & 1\\
1 & 0 & 0\\
0 & 1 & 0\\
\end{bmatrix},$$
where $\zeta = \zeta_{7}$ is a primitive $7$-th root of unity. Let $\chi$ and $\chi^{'}$ be the characters corresponding to $\rho$ and $\rho^{'}$ respectively. Then the character fields of $\rho$ and $\rho^{'}$ over $F$ are: $F(\chi) = \mathbb{Q}(\zeta + \zeta^2+ \zeta^4)$ and $F(\chi^{'}) = \mathbb{Q}(\zeta^3 + \zeta^6+ \zeta^5)$ respectively. Note that $F(\chi) = F(\chi^{'})$. We compute $m_{F}(\rho)$ and $m_{F}(\rho^{'})$.
As $m_{F}(\rho) = m_{F(\chi)}(\rho)$, $m_{F}(\rho^{'}) = m_{F(\chi^{'})}(\rho^{'})$ and $F(\chi) = F(\chi^{'})$, so without loss of generality, we can take $F$ to be $\mathbb{Q}(\zeta + \zeta^2+ \zeta^4)$ for computing Schur indices.

Let $\lambda$ and $\lambda^{'}$ be two faithful irreducible complex representations of $K$, and are defined by $\lambda: x \mapsto \zeta,$ $ \lambda^{'}: x \mapsto \zeta^3.$
Observe that $\rho \cong \lambda\uparrow^{G}_{K}$, $\rho \downarrow^{G}_{K} = \lambda \oplus \lambda^{y} \oplus \lambda^{y^2}$; $\rho^{'} \cong \lambda^{'}\uparrow^{G}_{K}$, $\rho^{'} \downarrow^{G}_{K} = \lambda^{'} \oplus {\lambda^{'}}^{y} \oplus {\lambda^{'}}^{y^2}$ and $I = I^{'} = G$.

Let $E:= F(\lambda) = F(\lambda^{'}) = \mathbb{Q}(\zeta)$. Then $E/F$ is a cyclic extension with Gal$(E/F)=$ Gal$(\mathbb{Q}(\zeta)/\mathbb{Q}(\zeta + \zeta^2 + \zeta^4)) \cong C_{3}$. Consider $\sigma$ : $\zeta \mapsto \zeta^2$, which is an automorphism of $\mathbb{Q}(\zeta)$. Note that Gal$(E/F)= \langle \sigma \rangle.$ Observe that $\lambda^{y} = \lambda^{\sigma}$ and ${\lambda^{'}}^y = {\lambda^{'}}^{\sigma}$. The Wedderburn component corresponding to the irreducible $F$-representation $m_{F}(\rho)\rho$ is: $(E/F, \sigma, 1) \cong M_{3}(\mathbb{Q}(\zeta + \zeta^2 + \zeta^4))$. The Wedderburn component corresponding to the irreducible $F$-representation $m_{F}(\rho^{'})\rho^{'}$: $(E/F, \sigma, 1) \cong M_{3}(\mathbb{Q}(\zeta^3 + \zeta^6 + \zeta^5))$. So $m_{F}(\rho) = m_{F}(\rho^{'}) = 1$, and this implies that $m_{\mathbb{Q}}(\rho) = m_{\mathbb{Q}}(\rho^{'}) = 1$.

Since $\rho$, $\rho^{'}$ are algebraically conjugate over $\mathbb{Q}$, $G$ has unique faithful irreducible $\mathbb{Q}$-representation of degree $6$, and the corresponding Wedderburn component in $\mathbb{Q}[G]$ is $M_{3}(\mathbb{Q}(\zeta + \zeta^2 + \zeta^4))$.
\section{Solvable groups where main result fails}\label{Section 9}
In this final section, we mention two examples where the main
theorem of this paper do not carry over. The first is that of a
solvable group of derived length $>2$ and the second one is that of
a metabelian group which does not have a prime power order maximal abelian normal
subgroup containing $G^{'}$.
\begin{remark}
The statement $(1)$ and statement $(2)$ of Theorem \ref{main} may not be true for groups of derived length more than $2$. For example consider ${\rm SL}_2(\mathbb{F}_3)$, which is of derived length $3$.
${\rm SL}_2(\mathbb{F}_3)$ has three degree 2 faithful irreducible complex representations, say, $\rho_5, \rho_6, \rho_7$ with characters $\chi_5, \chi_6, \chi_7$ respectively (see \cite{Isaacs-1976}, p.$288$). Note that $\rho_5$ is not realizable over $\mathbb{Q}$, and in fact it is  realizable over $\mathbb{Q}(\sqrt{-1})$. So the Schur index of $\rho_5$ w.r.t. $\mathbb{Q}$ is $2$. On the other hand, the characters of $\rho_6$ and $\rho_7$ are Galois conjugate over $\mathbb{Q}$ and their Schur index w.r.t. $\mathbb{Q}$ is $1$.
Thus over $\mathbb{Q}$, $2(\rho_5)$ and $\rho_6 \oplus \rho_7$ are two faithful irreducible representations of ${\rm SL}_2(\mathbb{F}_3)$ of the same degree $4$. In this case, the statement $(1)$ does not hold true, but the statement $(2)$ holds true.

Now consider over $\mathbb{Q}(\sqrt{-1})$ instead of $\mathbb{Q}$. The Schur index of $\rho_5$ over $\mathbb{Q}(\sqrt{-1})$ is equal to $1$. On the other hand, the characters of $\rho_6$ and $\rho_7$ are Galois conjugate over $\mathbb{Q}(\sqrt{-1})$ and their Schur index w.r.t. $\mathbb{Q}(\sqrt{-1})$ is $1$. Hence $\rho_6 \oplus \rho_7$ is equivalent to a $\mathbb{Q}(\sqrt{-1})$-irreducible representation of ${\rm SL}_2(\mathbb{F}_3)$. Thus over $\mathbb{Q}(\sqrt{-1})$, $\rho_5$ and $\rho_6 \oplus \rho_7$ are faithful irreducible representations of ${\rm SL}_2(\mathbb{F}_3)$ of degree $2$ and $4$ respectively. In this case, statement $(1)$ holds true, but statement $(2)$ does not hold true. Though in both situations over $\mathbb{Q}$ and $\mathbb{Q}(\sqrt{-1})$, the Schur index of any faithful irreducible complex representation of ${\rm SL}_2(\mathbb{F}_3)$ is equal to $1$ or $2$.
\end{remark}
\begin{remark}
The statement $(3)$ of Theorem \ref{main} may not be true for a metabelian group which does not have any prime power order maximal abelian normal subgroup containing the derived subgroup. Let
$$G = \langle{x , y ~|~ x^7 = y^9 = 1, y^{-1}xy = x^2}\rangle.$$
Note that $G^{'} = \langle{x}\rangle \cong C_{7}$.
Let $K = \langle{x, y^3}\rangle \cong C_{21}$ be a maximal abelian normal subgroup of $G$ containing $G^{'}$. Let $F = \mathbb{Q}$. $G$ has four degree three faithful irreducible complex representations $\rho_{1}, \rho_{2}, \rho_{3}, \rho_{4}$, and  $\chi_{1}, \chi_{2}, \chi_{3}, \chi_{4}$ be their corresponding characters respectively.
Let $\lambda_{1}, \lambda_{2}, \lambda_{3}, \lambda_{4}$ be the irreducible complex representations of $K$, and are given by:
$\lambda_{1}(x) = \zeta,~ \lambda_{1}(y^3) = \omega;$
$\lambda_{2}(x) = \zeta^{3},~ \lambda_{2}(y^3) = \omega^2;$
$\lambda_{3}(x) = \zeta,~ \lambda_{3}(y^3) = \omega^2;$
$\lambda_{4}(x) = \zeta^{3},~ \lambda_{4}(y^3) = \omega,$
and $\rho_{1} = \lambda_{1} \uparrow^{G}_{K}, \rho_{2} = \lambda_{2} \uparrow^{G}_{K}, \rho_{3} = \lambda_{3} \uparrow^{G}_{K}, \rho_{4} = \lambda_{4} \uparrow^{G}_{K}.$
The character fields of $\chi_{1}, \chi_{2}, \chi_{3}, \chi_{4}$ over $F$ are: $F(\chi_{1}) = \mathbb{Q}(\zeta + \zeta^2+ \zeta^4, \omega), F(\chi_{2}) = \mathbb{Q}(\zeta^3 + \zeta^6+ \zeta^5, \omega^2)$, $F(\chi_{3}) = \mathbb{Q}(\zeta + \zeta^2+ \zeta^4, \omega^2), F(\chi_{4}) = \mathbb{Q}(\zeta^3 + \zeta^6+ \zeta^5, \omega)$. Note that $F(\chi_{1}) = F(\chi_{2}) = F(\chi_{3}) = F(\chi_{4})$, and is of degree $4$ extension of $\mathbb{Q}$. So $\chi_{1}, \chi_{2}, \chi_{3}, \chi_{4}$ are Galois conjugates over $\mathbb{Q}$. From the Theorem 10.16 in \cite{Isaacs-1976}, the Schur index of $\rho_{i}~(i = 1,2,3,4)$ w.r.t. $\mathbb{Q}$ is equal to $3$. Thus, $G$ has unique faithful irreducible $\mathbb{Q}$-represtation of degree $36$.
\end{remark}
{\bf Acknowledgement:}
The work is done while the first author is post doctoral fellow at Harish-Chandra Research Institute (HRI), Prayagraj (Allahabad), India. He wishes to express thanks to the institute for giving all the facilities to complete this work. The first author would like to thank Rahul Dattatraya Kitture for useful discussions.

\end{document}